\numberwithin{equation}{section}
\newtheorem{theorem}{Theorem}[section]
\newtheorem{corollary}[theorem]{Corollary}
\newtheorem{lemma}[theorem]{Lemma}
\numberwithin{equation}{section}
\DeclareMathOperator{\dg}{deg}
\newcommand{\al}{\alpha}
\newcommand{\dl}{\zeta}
\newcommand{\e}{\eta}
\newcommand{\ph}{\varphi}
\DeclareMathOperator{\core}{core}
\newcommand{\barr}{\bar {\mathrm{r}}}
\newcommand{\rr}{\mathrm{r}}
\begin{document}

\title[On rank of the join of two subgroups in a free group]
 {On rank of the join of two subgroups in a free group}
\author{S. V. Ivanov}
  \address{Department of Mathematics\\
  University of Illinois \\
  Urbana\\   IL 61801\\ U.S.A.} \email{ivanov@illinois.edu}
\thanks{Supported in part by  the NSF under  grant  DMS 09-01782.}
\subjclass[2010]{Primary 20E05, 20E07, 20F65, 57M07.}

\begin{abstract}
Let $H, K$ be two finitely generated subgroups  of a free group, let $\langle H, K \rangle$ denote  the
subgroup generated by  $H, K$, called the join of $H, K$,  and let neither of $H$, $K$  have finite index in $\langle H, K \rangle$.
We prove the existence of an epimorphism $\dl : \langle H, K \rangle \to F_2$,
where $F_2$ is a free group of rank 2, such that the restriction of $\dl$ on both $H$ and $K$ is injective
and the restriction    $\dl_0 : H \cap  K  \to  \dl (H)  \cap \dl (K) $   of $\dl$ on   $H \cap  K $
to  $\dl (H)  \cap \dl (K)$ is surjective. This is obtained as a corollary of an analogous result on rank of  the generalized join of two  finitely generated subgroups in a free group.
\end{abstract}

\maketitle


\section{Introduction}

It is a recurring  theme in group theory to embed a countable group into a 2-generated group
often with some additional properties, see \cite{CI},   \cite{Guba}, \cite{HNN}, \cite{LS}, \cite{Ol}.
In this article we will look at two finitely generated  subgroups of a free group  from
a similar perspective.

Let $H, K$ be two finitely generated subgroups  of a free group
$F$. Let  $S(H,K)$ denote a set of representatives of those double cosets $H g K$,
$g \in F$, for which the intersection $H \cap g K g^{-1}$ is nontrivial.

According to Walter Neumann \cite{WN}, the set $S(H,K)$ is finite and a formal disjoint union
$\bigvee_{s \in S(H,K)} H \cap s K s^{-1}$   of subgroups
$H \cap s K s^{-1}$, $s \in S(H,K)$, could be considered as a generalized intersection of $H$ and $K$.
Let $\rr(F)$ denote the rank of a free group $F$ and let $\barr(F) := \max (\rr(F)-1,0)$
denote the reduced rank of $F$.

\begin{theorem}\label{Th1} Suppose that $H, K$ are two finitely generated subgroups  of a free group
$F$, $L = \langle H, K, S(H,K) \rangle$ denotes  the
subgroup generated by  $H, K, S(H,K)$, and neither $H$ nor $K$  has finite index in $L$.
Then there is an epimorphism $\e : L \to F_2$,
where $F_2$ is a free group of rank 2, such that the restriction of $\e$ on $H$ and $K$ is injective, a set
$S(\e(H), \e(K) )$ for subgroups $\e(H), \e(K)$ of $F_2$ can be taken to be $\e(S(H,K))$,  and,
 for every $s \in S(H,K)$, the restriction
 $$
 \e_s : H \cap s K s^{-1} \to  \e(H)  \cap \e(s) \e(K) \e(s)^{-1}
 $$
of $\e$ on   $H \cap s K s^{-1}$ to $\e(H)  \cap \e(s) \e(K) \e(s)^{-1}$ is surjective.
\end{theorem}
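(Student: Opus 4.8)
The plan is to realise $\e$ as the map on $\pi_1$ induced by a carefully chosen combinatorial map of Stallings graphs, building it up by reducing the rank of the ambient free group one unit at a time. Since $H,K$ are finitely generated and $S(H,K)$ is finite, $L=\langle H,K,S(H,K)\rangle$ is a finitely generated free group; write $n=\rr(L)$, and observe that every element of $S(H,K)$ lies in $L$, so that $\e$ will be defined on all the relevant data. The hypothesis that neither $H$ nor $K$ has finite index in $L$ forces $n\geq 2$: if $n=0$ then $H=1$ has finite index in $L=1$, and if $n=1$ then, the only infinite-index subgroup of $L\cong\mathbb Z$ being trivial, we would get $H=K=1$, hence $S(H,K)=\varnothing$ and $L=1$, contradicting $n=1$. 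If $n=2$ then $L\cong F_2$, and any isomorphism $\e\colon L\to F_2$ does the job, since an isomorphism preserves double cosets, intersections and indices verbatim. So I would assume $n\geq 3$ and argue by induction on $n$.

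For the graph model, write $L=\pi_1(\Gamma,v_0)$ for a finite connected folded core graph $\Gamma$, and let $\rho_H\colon\Delta_H\to\Gamma$ and $\rho_K\colon\Delta_K\to\Gamma$ be the immersions of the finite core graphs of the subgroups $H$ and $K$, so that $(\rho_H)_*\pi_1(\Delta_H)=H$ and $(\rho_K)_*\pi_1(\Delta_K)=K$. By Walter Neumann's description of the generalized intersection, the nontrivial connected components of the core of the fibre product $\Delta_H\times_\Gamma\Delta_K$ are in natural bijection with the elements of $S(H,K)$, the component indexed by $s$ carrying $\pi_1$ isomorphically onto a conjugate of $H\cap sKs^{-1}$. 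Thus the whole package of data to be preserved is encoded in the diagram formed by $\Gamma$, the two immersions $\rho_H,\rho_K$, and their fibre product; and the assumption that $H,K$ have infinite index in $L$ translates into the statement that the covers $\wht\Gamma_H\to\Gamma$ and $\wht\Gamma_K\to\Gamma$ are infinite-sheeted, equivalently that $\rho_H$ and $\rho_K$ are not locally surjective.

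The heart of the argument is one rank-reducing step: I would construct an epimorphism $\theta\colon L\to L_1$ onto a free group of rank $n-1$ such that $\theta$ is injective on $H$ and on $K$; $\theta(S(H,K))$ is a set of representatives of the double cosets $\theta(H)g\,\theta(K)$ with nontrivial intersection, with the map $H\cap sKs^{-1}\to\theta(H)\cap\theta(s)\theta(K)\theta(s)^{-1}$ surjective for each $s$; neither $\theta(H)$ nor $\theta(K)$ has finite index in $L_1$; and $L_1=\langle\theta(H),\theta(K),\theta(S(H,K))\rangle$. I would obtain $\theta$ from a combinatorial map $\Gamma\to\Gamma_1$ performing a single elementary, rank-dropping identification, chosen --- possibly after first applying an automorphism of $L$ (equivalently, re-choosing a basis) to put $\Delta_H$ and $\Delta_K$ in favourable position --- so that the induced folding disturbs neither the subgraphs carrying $H$, $K$ nor their fibre product, and creates no new coincidences between the images of $\Delta_H$ and $\Delta_K$. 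The hard part, which I expect to be the content of the technical lemmas of Sections 2--4, is to show that the ``slack'' afforded by the local non-surjectivity of $\rho_H$ and $\rho_K$ always suffices to carry out such a move while $n\geq 3$; note that this slack is genuinely subtle, since $\rho_H$ may well hit every edge of $\Gamma$ while still failing to be locally surjective at some vertex.

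Granting this rank-reducing step, I would iterate it to obtain a chain of epimorphisms $L=L_0\twoheadrightarrow L_1\twoheadrightarrow\cdots\twoheadrightarrow L_{n-2}$ with $\rr(L_i)=n-i$, along which injectivity on the images of $H$ and $K$, surjectivity onto the images of the generalized intersection, the correspondence of double-coset representatives, the infinite-index property and the generation statement all persist. Since $L_{n-2}$ is free of rank $2$, it is isomorphic to $F_2$, and the composite $\e\colon L\to L_{n-2}\cong F_2$ is the required epimorphism: it is surjective by construction; it is injective on $H$ and on $K$, being a composite of maps injective there; $\e(S(H,K))$ realises $S(\e(H),\e(K))$; and for each $s\in S(H,K)$ the restriction $\e_s$ is surjective, being a composite of surjections of the corresponding pieces. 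This would complete the proof modulo the rank-reducing step, which is the only real obstacle.
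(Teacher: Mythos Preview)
Your plan is correct and matches the paper's approach exactly: the paper also works with the Stallings graphs $X,Y,W,Z$ and their pullback diagram, first applies an automorphism $\tau$ of $\pi_1(Z)$ to put $X$ and $Y$ in favourable position, and then performs a single rank-reducing ``conservative $(f,p)$-transformation'' (Lemma~4.2) that kills one generator of $Z$ while preserving $\barr(X)$, $\barr(Y)$, each $\barr(W_s)$, and the identification $W=\core(X\times_Z Y)$; iterating this step yields~$\e$. As you correctly anticipated, the entire substance of the argument is the existence of this one rank-reducing move, established via the technical Lemmas~3.1--3.5 and~4.1--4.2.
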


Informally, we can say that, when given a {\em generalized join}
$$
L = \langle H, K, S(H,K) \rangle
$$
of two finitely generated subgroups $H$, $K$ of a free group $F$ such that neither of $H, K$ has finite index in $L$, it is always possible to assume
that  $\rr(L) =2$,   i.e., the subgroup $\langle H, K, S(H,K) \rangle$ is 2-generated.
Since a free group of an arbitrary finite rank is isomorphic to a subgroup of a
free group of rank 2, it is easy to obtain the equality   $\rr (F) = 2$ and our goal here
will be to attain the equality $\rr (L) = 2$.

We mention an easy corollary of  Theorem~\ref{Th1} for a ``pure" intersection case.

\begin{corollary}\label{Cor} Suppose that $H, K$ are two finitely generated subgroups  of a free group
$F$, $\langle H, K \rangle$ denotes  the
subgroup generated by  $H, K$, and neither $H$ nor $K$  has finite index in their {\em join} $\langle H, K \rangle$.
Then there is an epimorphism $\dl : \langle H, K \rangle \to F_2$,
where $F_2$ is a free group of rank 2, such that the restriction of $\dl$ on $H$ and $K$ is injective  and the restriction    $\dl_0 : H \cap  K  \to  \dl (H)  \cap \dl (K) $   of $\dl$ on   $H \cap  K $ to
$\dl (H)  \cap \dl (K) $   is surjective.
\end{corollary}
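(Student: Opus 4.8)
The plan is to obtain the Corollary as a direct specialization of Theorem~\ref{Th1}, after one preparatory move: replacing the ambient free group $F$ by the subgroup $\langle H,K\rangle$. This is harmless, since $\langle H,K\rangle$ is itself free by the Nielsen--Schreier theorem and contains $H$, $K$, and $H\cap K$ unchanged; but it is also decisive, because now every representative in $S(H,K)$ lies in $\langle H,K\rangle$, so the generalized join $L=\langle H,K,S(H,K)\rangle$ occurring in Theorem~\ref{Th1} is simply $\langle H,K\rangle$ itself. Under this identification the hypothesis of Theorem~\ref{Th1} --- that neither $H$ nor $K$ has finite index in $L$ --- becomes verbatim the hypothesis of the Corollary, so Theorem~\ref{Th1} supplies an epimorphism $\e\colon\langle H,K\rangle\to F_2$ that is injective on $H$ and on $K$, with $S(\e(H),\e(K))$ taken to be $\e(S(H,K))$ and with every $\e_s$ surjective. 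I would then set $\dl:=\e$; this is already an epimorphism onto $F_2$ that is injective on $H$ and on $K$, so only the surjectivity of $\dl_0$ remains.

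To handle $\dl_0$ I would split into two cases according to whether $H\cap K$ is trivial. If $H\cap K\neq 1$, then the trivial double coset $H\cdot 1\cdot K$ has nontrivial intersection $H\cap 1K1^{-1}=H\cap K$, hence is one of those enumerated by $S(H,K)$; choosing $1$ as its representative we may assume $1\in S(H,K)$, and the $s=1$ instance of the last conclusion of Theorem~\ref{Th1} asserts precisely that $\e_1\colon H\cap K\to\e(H)\cap\e(K)$ is surjective, so $\dl_0:=\e_1$ works. If instead $H\cap K=1$, then the trivial double coset is \emph{not} among those enumerated by $S(H,K)$; since $\e(S(H,K))$ is a complete, non-redundant list of representatives of the nontrivial-intersection double cosets of $(\e(H),\e(K))$, none of which represents the trivial double coset, we get $\e(H)\cap\e(K)=1=\dl(H\cap K)$, so $\dl_0$ is surjective for trivial reasons.

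The one genuinely substantive observation is the opening move --- that one is free to take the ambient free group to be $\langle H,K\rangle$ --- which is exactly what collapses the generalized join of Theorem~\ref{Th1} to the ordinary join needed in the Corollary; everything after that is reading off the $s=1$ instance (or noting its absence) among conclusions already in hand, and all the real work sits inside Theorem~\ref{Th1}. The points I would want to verify carefully are minor: that shrinking $F$ discards nothing the Corollary cares about (it does not, since the Corollary only mentions $\langle H,K\rangle$ and $H\cap K$, and the $g=1$ double coset is unaffected), and, in the case $H\cap K=1$, that the clause ``$S(\e(H),\e(K))$ can be taken to be $\e(S(H,K))$'' is indeed strong enough to keep the trivial double coset of $(\e(H),\e(K))$ of trivial-intersection type; should that not be automatic from the statement alone, this subcase would instead be read off directly from the construction of $\e$ carried out in the proof of Theorem~\ref{Th1}.
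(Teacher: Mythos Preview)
Your proposal is correct and follows essentially the same route as the paper: derive the Corollary directly from Theorem~\ref{Th1}. The paper's own proof is a terse two sentences that simply notes the infinite-index hypothesis passes from $\langle H,K\rangle$ up to $L=\langle H,K,S(H,K)\rangle$ and then invokes Theorem~\ref{Th1}; your preparatory replacement of $F$ by $\langle H,K\rangle$ (forcing $L=\langle H,K\rangle$, so that $\eta$ is already the desired $\zeta$) and your case split on whether $H\cap K$ is trivial make explicit exactly the details the paper leaves implicit.
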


It is worthwhile to mention that Theorem~\ref{Th1} is false in case when one of subgroups $H, K$ has finite index in $L = \langle H, K, S(H,K) \rangle$.  Indeed, if $H$ has finite index $j$ in $L$ and $\rr(L) = n > 2$  then, according to the Schreier's formula, we have $\rr(H) = (n-1) j +1$, see \cite{LS}.
Since $\e$ is an epimorphism, the subgroup $\e(H)$ has finite index $j' \le j$ in $F_2 = \e(L)$  and it follows from the  Schreier's formula that $\rr(\e(H)) = j' +1$. Hence, the equality $\rr(H) =\rr(\e(H))$ is impossible as $(n-1) j > j'$ and
the restriction of $\e$ may not be injective on $H$.

We also remark that Theorem~\ref{Th1} is motivated by the author's article \cite{Iv10} in which  certain modifications
of Stallings graphs of subgroups $H, K$, that do not change the ranks
$\rr(H), \rr(K)$,   $\rr(H \cap s K s^{-1})$ for every $s \in S(H,K)$, are used to achieve some desired properties
of the Stallings graph of $L$ whose rank $\rr(L)$, however, might increase under carried out modifications.
In this article,  we make different modifications, in somewhat opposite direction, that decrease the rank $\rr(L)$ down to 2 while the ranks $\rr(H), \rr(K)$,  $\rr(H \cap s K s^{-1})$ for every $s \in S(H,K)$, and the cardinality $|S(H,K)|$  are kept fixed.

\section{Preliminaries}

Suppose that $Q$ is a graph. Let $VQ$ denote the set of vertices of $Q$ and let  $EQ$ denote the set
of oriented edges of $Q$. If $e \in EQ$ then $e^{-1}$
denotes the edge with the opposite to $e$ orientation, $e^{-1} \ne e$.

For $e \in EQ$, let $e_-$ and $ e_+$ denote the initial and terminal, respectively, vertices of $e$. A path
$p = e_1 \dots e_k$, where $e_i \in EQ$,
$(e_i)_+ =  (e_{i+1})_-$, $i =1, \dots, k-1$, is called {\em reduced } if, for every  $i =1, \dots, k-1$,
$e_i \ne e_{i+1}^{-1}$. The {\em length} of $p$ is $k$, denoted $|p| =k$.
The initial vertex of  $p$ is  $p_- = (e_1)_-$ and the terminal vertex of $p$ is  $p_+ = (e_k)_+$.
A path $p$ is {\em closed } if $p_- = p_+$.
If $p = e_1 \dots e_k$ is a closed path then a {\em cyclic permutation}
$\bar p$ of $p$ is a path of the form $e_{1+i}e_{2+i}  \dots e_{k+i}$, where $i =1,\dots,k$
and the indices are considered $\mod k$.

The subgraph of $Q$ that consists of edges of all closed paths $p$ of $Q$ such that  $|p| >0$
and any cyclic permutation of $p$ is reduced  is called the {\em core} of $X$, denoted $\core(X)$.

Let $F$ be a free group of finite rank $\rr(F) >1$. We consider $F$ as the fundamental group $\pi_1(U)$
where $U$ is a bouquet of $\rr(F)$ circles.

Following Stallings \cite{St}, see also \cite{WD}, \cite{KM}, with every (finitely generated) subgroup $H$ of
$F = \pi_1(U)$, we can associate a connected (resp. finite) graph $X = X(H)$
with a distinguished vertex $o \in VX$  and a locally injective map $\ph : X \to U$ of graphs so that
$H$ is isomorphic to $\pi_1(X, o )$.
Such a graph $X$ of $H$ is called a {\em Stallings graph} of $H$ and the map  $\ph$
is called a {\em canonical immersion}.

Consider two finitely generated subgroups $H, K$ of the free group
$F$. Pick a set $S(H,K)$ of representatives of those double cosets $H g K$,
$g \in F$, for which the intersection $H \cap g K g^{-1}$ is nontrivial.

Let $X, Y$ be finite Stallings graphs of the subgroups $H, K$, resp.,  and let $X  \times_U  Y$
denote the  pullback of  canonical immersions
\begin{gather}\label{ph1}
\ph_X : X \to U ,  \quad  \ph_Y : Y \to U  .
 \end{gather}

According to Walter Neumann \cite{WN},  the set $S(H,K)$ is finite and the
nontrivial intersections  $H \cap s K s^{-1}$, where $s \in S(H, K)$,
are in bijective correspondence with connected components $W_s$ of the core
$$
W := \core(X  \times_U  Y)  .
$$
Moreover,  for every $s \in S(H,K)$, we have
\begin{gather*}
\barr ( H \cap s K s^{-1} ) =  \tfrac 12 |  E W_s| - | V W_s | ,
\end{gather*}
where $\barr(F) = \max( \rr (F)-1, 0)$ is the reduced rank of a free group $F$ and
$|T|$ is the cardinality of a finite set $T$.
Recall that, according to our notation, the number of nonoriented edges of  $W_s$ is $\tfrac 12 |  E W_s|$.

For a finite graph $Q$,  denote
\begin{gather*}
\barr (Q) : = \tfrac 12 |  E Q| - | V Q | ,
\end{gather*}
hence, $\barr (Q)$ is the  negative Euler characteristic of $Q$.

In particular,  $\barr (W_s) = \barr ( H \cap s K s^{-1} )$ and
\begin{gather*}
\sum_{s \in S(H,K)} \barr (H \cap s K s^{-1} ) = \barr (W) = \tfrac 12 | E W | - |  V W |  .
\end{gather*}

Let $\al'_X$, $\al'_Y$  denote the projection maps $X \times_U  Y \to X$,
$X \times_U  Y \to Y$, resp. Restricting  $\al'_X, \al'_Y$ to
$W \subseteq  X  \times_U  Y $, we obtain immersions
$$
\al_X : W \to X , \quad   \al_Y : W \to Y .
$$

We also consider the  subgroup $L = \langle H, K, S(H,K) \rangle$ of $F$.
Let $Z$ denote a Stallings graph of  $\langle H, K, S(H,K) \rangle$
and let $\gamma : Z \to U$ denote a canonical immersion.

Let $\beta_X : X \to Z$, $\beta_Y : Y \to Z$ be graph maps that satisfy the equalities
$\ph_X = \gamma \beta_X$, $\ph_Y = \gamma \beta_Y$, see \eqref{ph1} and Fig.~1.
Clearly, $\beta_X$ and  $\beta_Y$ are immersions.

It follows from the definitions that, for every  $Q \in \{  U, W, X, Y,  Z \}$,  there is a canonical immersion
$\ph : Q \to U$, where $\ph = \ph_Q$ if $Q = X $ or $Q = Y$,
$\ph = \mbox{id}_U$ if $Q = U$,  $\ph =  \gamma  \beta_X    \al_X =   \gamma  \beta_Y    \al_Y$ if $Q = W$,
and  $\ph = \gamma$ if $Q = Z$, see Fig.~1.

\begin{center}
\begin{tikzpicture}[scale=.72]
\draw [-latex](.5,.5) -- (1.5, 1.5);
\draw [-latex](.5,-.5) -- (1.5, -1.5);
\draw [-latex](2.5,1.5) -- (3.5, 0.5);
\draw [-latex](2.5,-1.5) -- (3.5, -0.5);
\draw [-latex](4.5,0) -- (6.3, 0);
\draw [-latex](2.5,1.8) -- (6.3, 0.2);
\draw [-latex](2.5,-1.8) -- (6.3, -0.2);

\node at (-.2,0) {$W$};
\node at (2,2.05) {$X$};
\node at (2,-2.05) {$Y$};
\node at (.6,1.1) {$\alpha_X$};
\node at (.6,-1.1) {$\alpha_Y$};
\node at (3.5,1.1) {$\beta_X$};
\node at (3.5,-1.1) {$\beta_Y$};
\node at (4.5,1.3) {$\varphi_X$};
\node at (4.5,-1.3) {$\varphi_Y$};

\node at (5.2,0.3) {$\gamma$};
\node at (4,0) {$Z$};
\node at (6.75,0) {$U$};
\node at (2.4,-3.) {Fig. 1};
\end{tikzpicture}
\end{center}

It is not difficult to see that, up to a suitable conjugation of  the ambient free group  $F$, we may assume that
the graphs $X, Y$ coincide with their cores. Clearly, the same property holds for graphs
$U, W, Z$ as well.

Without loss of generality, we may also assume that
$Z =  U$ and  $\gamma = {\mbox{id}}_Z$.  For this reason, we will disregard $U$ and $\gamma$ in subsequent arguments.

\section{Five Lemmas}

Let $A = \{ a_1, a_1^{-1}, \ldots,  a_n, a_n^{-1} \}$ be an alphabet and let $F(A) = \langle a_1,  \dots,  a_n \rangle$ be a free group with free generators $a_1,  \dots,  a_n$.
It will be convenient to consider elements of the free group $F(A)$  as   words over the alphabet  $A = \{ a_1, a_1^{-1}, \dots,  a_n, a_n^{-1} \}$.
A letter-by-letter equality of words $u, w$ over $A$ is denoted  $u \equiv v$.
Suppose  $w \equiv c_1 \ldots c_\ell$ is a word over $A$, where $c_1, \ldots, c_\ell \in A$ are letters.
The length of $w$ is denoted $|w| = \ell$.
We say that a word $w \equiv c_1 \ldots c_\ell$ is {\em reduced}  if $|w| >0$ and $c_i \ne c_{i+1}^{-1}$ for every $i = 1, \ldots, \ell-1$.

A finite graph $B$ is called a {\em labeled $A$-graph}, or just an {\em  $A$-graph},
if $B$ is equipped with a function $\varphi : EB \to A$ so that,
for every $e \in EB$, we have $\varphi( e^{-1}) =  {\varphi( e)}^{-1}$.
An  $A$-graph $B$ is called   {\em irreducible} if, for every pair
$e_1, e_2 \in EB$, the equalities $\varphi(e_1) = \varphi(e_2)$ and $(e_1)_- = (e_2)_-$
imply that $e_1 = e_2$. Note that an irreducible $A$-graph need not be connected and may
contain vertices of degree $< 2$.

It is easy to see that $B$ is an irreducible $A$-graph with a labeling function
$\ph$ if and only if there is an immersion $\ph_0 : B \to A_0$, where $A_0$ is a bouquet of
$n$ oriented circles  $a_1, \dots, a_n$   so that the restriction of $\ph_0$ on  $EB$ is  $\ph$.

We now discuss some operations over irreducible $A$-graphs. Let $B$ be a finite irreducible $A$-graph.
A connected component $C$  of $B$ is called {\em $A$-complete}
if every vertex of $C$  has degree $|A| = 2n$. Equivalently,
the restriction of $\ph_0$ on $C$, $\ph_{0|_{C}} : C \to A_0  $, is a covering.
If a connected component $C$  of $B$ is not $A$-complete, we will say that $C$ is  {\em $A$-incomplete}.

\smallskip

Suppose $V_1 \subseteq VB$ is a subset of vertices of $B$, $w$ is a reduced word over $A$,
$w \equiv c_1 \dots c_\ell$, where $c_1, \dots, c_\ell  \in A$ are letters.  For every $v \in V_1$,
we consider a new graph which is a path $p(v)  = e_1(v) \dots e_\ell(v)$, consisting of edges
$e_1(v), \dots, e_\ell(v)$ labeled by  the letters $c_1, \dots, c_\ell$, resp., so $\ph(p(v)) \equiv w$.
For every $v \in V_1$, we attach the path  $p(v) $  to $B$ by identifying the vertices $p(v)_-$ and $v $.
This way we obtain a labeled $A$-graph $B'(V_1, w)$. We will then apply a folding process  to  $B'(V_1, w)$ that inductively identifies edges $e$ and $e'$ whenever $\ph(e) = \ph(e')$ and $e_- = e'_-$. As a result, we obtain an irreducible $A$-graph $B(V_1, w)$ that contains the original $A$-graph $B$, $B \subseteq B(V_1, w)$, and  has the following properties:  $\barr(B(V_1, w) ) = \barr(B )$ and, for
every $v \in V_1$, there is a unique path $p(v)$ in
$B(V_1, w)$ such that $p(v)_- = v$ and  $\ph(p(v)) \equiv w$.

We also observe that $B(V_1, w) =  B \cup F(V_1, w)$, where $F(V_1, w)$ is a forest,
i.e., a disjoint collection of trees, and $B \cap F(V_1, w) =  V_1^*$, where $V_1^*   \subseteq VB$.

\begin{lemma}\label{BF} Suppose that $B$ is a finite irreducible  $A$-graph
and no connected component of $B$ is $A$-complete. Then, for every subset $V_1 \subseteq VB$,
there is a reduced word $w = w(V_1)$ over $A$ and   there is an irreducible $A$-graph
$B(V_1, w)$  that  has the following properties.
The graph
$B(V_1, w)$ contains $B$ as a subgraph, $B(V_1, w) = B \cup F(V_1, w)$, where $F(V_1, w)$ is a forest, $B \cap F(V_1, w) =  V_1^*$,  $V_1^*   \subseteq VB$,
and $\barr(B(V_1, w) ) = \barr(B )$.

Furthermore, for every vertex $v \in V_1$, there is a unique path $p(v)$ in $B(V_1, w)$  such that $p(v)_- = v$,   $\ph(p(v)) \equiv w$, $p(v)_+$ has degree one,  and  $p(v)_+ \not\in B$.
\end{lemma}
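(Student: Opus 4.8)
The first list of properties in the statement --- that $B(V_1,w)$ contains $B$, decomposes as $B\cup F(V_1,w)$ with $F(V_1,w)$ a forest and $B\cap F(V_1,w)=V_1^{*}\subseteq VB$, that $\barr(B(V_1,w))=\barr(B)$, and that each $v\in V_1$ has a unique path $p(v)$ with $\ph(p(v))\equiv w$ --- is supplied by the construction preceding the lemma for \emph{any} reduced word $w$. So the whole task is to pick $w$ so that, in addition, every $p(v)$ ends at a vertex of degree one lying outside $B$. The plan is to build a reduced word $w\equiv c_1\ldots c_\ell$ with two features: (a) for every $v\in V_1$, reading $w$ from $v$ traces a reduced path inside $B$ for its first $i_v$ letters, ending at a vertex $x_v\in VB$ that has no $B$-edge labelled $c_{i_v+1}$, with $i_v<\ell$; and (b) the letter $c_\ell$ does not occur among the positions $\ell-i_v,\ldots,\ell-1$ for any $v\in V_1$.

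First I would isolate a one-vertex escape step: for $x\in VB$ and any $\sigma_0\in A$ labelling an out-edge of $x$, there is a reduced word $z$ whose first letter is not $\sigma_0$ and which, read from $x$, stays inside $B$ for all but its last letter and then leaves $B$. The component of $x$ is not $A$-complete, so it has a vertex of degree $<2n$; after deleting the unique $\sigma_0$-edge at $x$, the part $D$ of the result still containing $x$ still has such a vertex $t$ --- immediate unless that edge is a bridge, in which case a handshake-lemma count in $D$ (in which $x$ has one less than its $B$-degree while all other vertices keep their $B$-degree) forbids every vertex of $D$ from having $B$-degree $2n$. I then take $z=\ph(q)\,\tau$ with $q$ a reduced path in $D$ from $x$ to $t$ (empty if $x=t$) and $\tau\in A$ missing at $t$, which is automatically distinct from the inverse of the last letter of $q$, and from $\sigma_0$ when $q$ is empty. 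Enumerating $V_1=\{v_1,\ldots,v_m\}$ I would then build reduced words $w^{(1)},\ldots,w^{(m)}=:w_0$, each a prefix of the next, with reading $w^{(k)}$ from each of $v_1,\ldots,v_k$ leaving $B$ before its end: take $w^{(1)}$ an escape word at $v_1$; given $w^{(k-1)}$, keep it if reading it from $v_k$ already leaves $B$, else that reading ends at some $x\in VB$ and I append the escape word at $x$ for $\sigma_0$ the inverse of the last letter of $w^{(k-1)}$ --- the concatenation is reduced, leaves $B$ from $v_k$, and still leaves $B$ from $v_1,\ldots,v_{k-1}$ because any extension of a word that leaves $B$ from $v_j$ also does. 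Finally I set $w:=w_0\,a_i^{|w_0|}a_j$ with $\{i,j\}=\{1,2\}$ chosen so that $w$ is reduced (possible since $n\ge2$); then $w$ satisfies (a) with every $i_v\le|w_0|$, and $c_\ell=a_j$ does not occur in the $|w_0|$ positions before position $\ell$, which gives (b).

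To finish I would show (a) and (b) force the conclusion, via the structural fact that each tree of $F=F(V_1,w)$ meets $B$ in exactly one vertex: from $\barr(B(V_1,w))=\barr(B)+\barr(F)+|V_1^{*}|$ and $\barr(F)=-(\text{number of trees of }F)$, the equality $\barr(B(V_1,w))=\barr(B)$ forces $F$ to have exactly $|V_1^{*}|$ trees, while every tree of $F$ meets $V_1^{*}$ (each edge of $F$ lies on one of the attached paths, hence is joined inside $B(V_1,w)$ to the vertex of $V_1$ where that path is glued to $B$), so by counting each tree meets $V_1^{*}$ once. Hence a reduced path of $B(V_1,w)$ that leaves $B$ enters a single tree through that tree's unique gluing vertex and can never return to $B$, a tree having no non-trivial reduced closed path; so by (a) every $p(v)_+\notin B$. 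Moreover $p(v)$ enters its tree at $x_v$ after $i_v$ steps and then follows the reduced path labelled $c_{i_v+1}\ldots c_\ell$ of length $\ell-i_v$; if $p(v)_+$ had degree $>1$ some tail would run past it inside that tree, necessarily a tail of some $p(v')$ with $x_{v'}=x_v$ and $i_{v'}<i_v$ agreeing with $p(v)$'s tail for its first $\ell-i_v$ edges, which forces $c_{\ell-s}=c_\ell$ with $s=i_v-i_{v'}\in\{1,\ldots,|w_0|\}$, against (b). Therefore $p(v)_+$ has degree one.

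The step I expect to be the main obstacle is exactly this degree-one requirement: the one word $w$ is glued at every vertex of $V_1$ simultaneously, so the dangling tails it creates are highly interdependent, and one must stop the endpoint of one tail from being an interior vertex of another --- which is precisely what is bought by arranging that the final letter of $w$ does not recur in a long block before it. The handshake-lemma step and the bookkeeping needed to keep all the concatenations reduced are the other points that require care.
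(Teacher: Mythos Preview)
Your argument is correct and takes a genuinely different route from the paper's. The paper proves the lemma by induction on $|V_1|$: assuming a word $w_1$ works for $\{v_1,\ldots,v_k\}$, it first pads $w_1$ to $w_2=w_1c_\ell^{\,m+2}$ (with $m$ the maximal diameter of a component of $B$), then attaches a $w_2$-path at $v_{k+1}$, folds, and performs a case analysis on where $p(v_{k+1})_+$ lands inside the already constructed graph $B(V_1,w_2)$, extending the word once or twice more by explicit suffixes read off that graph. You instead decouple the two requirements. Your first stage builds an ``escape'' word $w_0$ guaranteeing only that reading from every $v\in V_1$ leaves $B$; your second stage appends the block $a_i^{|w_0|}a_j$, whose sentinel letter $a_j$ is absent from the preceding $|w_0|$ positions, and this alone forces the degree-one condition. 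The key structural fact that makes your second stage work---that every tree of $F(V_1,w)$ meets $B$ in exactly one vertex, read off from the Euler-characteristic identity $\barr(B(V_1,w))=\barr(B)$---is never isolated in the paper, since the inductive construction there keeps explicit control of the forest at each step. Your approach is more modular and makes the role of the long monochromatic block transparent; the paper's is more hands-on but avoids the counting argument and the tree-by-tree analysis of tails.
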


\begin{proof} We prove this Lemma by induction on $| V_1 | \ge 1$.
To make the basis step, we let  $V_1 := \{ v \}$.
By the hypothesis of the Lemma, there is  a path $q $ in $B$ such that $\ph(q)$ is reduced,
$q_- = v$ and  $q_- = u$, where $u$ is a vertex of $B$ with $\dg (u)< 2n$.
Then there exists a letter $c \in A$ such that  there is no edge in $B$
with $e_- = u$ and  $\ph(e) = c$. Taking the word $w = \ph(q) c $,
we will obtain the desired result.

To make the induction step, assume that our claim holds for a set $V_1 = \{ v_1, \dots, v_k\}$ with $|V_1|=k$ and $V_2 = \{ v_1, \dots, v_k, v_{k+1} \}$, where $v_{k+1} \not\in V_1$. By the induction hypothesis, there exits a word $w_1 = c_1 \dots c_\ell$ and an irreducible $A$-graph $B(V_1, w_1)$ with the properties stated in Lemma. Let $m$ denote the maximum of distances between vertices in the same connected component of $B$ over all components of $B$. We replace $w_1$ by the reduced word $w_2 = w_1 c_\ell^{m+2}$ and construct the graph $B(V_1, w_2)$  for this new word $w_2$.
It is clear that $B(V_1, w_2)$ has all of the properties of $B(V_1, w_1)$. In addition, for every path $p(v_i)$ in $B(V_1, w_2)$, there is a factorization $p(v_i) = p_1(v_i)  p_2(v_i)$ so that
$\ph(p_2(v_i)) = c_\ell^{m+2}$, all the vertices of  $ p_2(v_i)$, except for $p_2(v_i)_+$, have degree 2 and $\dg  p_2(v_i)_+= 1$.

Now we take a new graph which is a path $p(v_{k+1})$ with  $\ph(p(v_{k+1})) = w_2$ and  attach it to   $B(V_1, w_2)$ so that    $p(v_{k+1})_-= v_{k+1}$. Then we do foldings  to produce an irreducible graph $B(V_2, w_2)$ in which there is a  path that starts at $v_{k+1}$  and is labeled by $w_2$.  We will denote this path by $p(v_{k+1})$. Note  $B(V_1, w_2)$ is a subgraph of   $B(V_2, w_2)$. Moreover, if $B(V_1, w_2)  \underset \ne \subset   B(V_2, w_2)$, then
$\dg p(v_{k+1})_+ =1$ and $p(v_{k+1})_+ \not\in B$, hence $B(V_2, w_2)$ has  all of the desired properties and the induction step is complete.  Otherwise, we may assume  that $B(V_1, w_2)  =   B(V_2, w_2)$ and we consider two cases depending on whether or not  $p(v_{k+1})_+ \in B \subseteq  B(V_2, w_2)$.
\smallskip

First assume that $p(v_{k+1})_+ \not\in B \subseteq  B(V_1, w_2)$, that is, the path $p(v_{k+1})$ ends in a vertex of a tree $T  \subseteq F(V_1, w_2) $, where
$$
B(V_1, w_2) = B \cup F(V_1, w_2)  ,   \qquad   B \cap F(V_1, w_2) = V_1^*  ,  \quad  V_1^*  \subseteq  VB  ,
 $$
and $F(V_1, w_2)$ is a forest.  Let $q_T$ be a shortest  path in $T$
so that $(q_T)_- = p(v_{k+1})_+ $ and $(q_T)_+ \not\in B$,   $\dg (q_T)_+=1$.  We remark that
$|q_T| >0$ because, otherwise, $p(v_{k+1})_+ =  p(v_{i})_+$ for some $i, i \le k$, whence $v_{k+1} =  v_{i}$, contrary to $v_{k+1} \not\in V_k$.
Let $b \in A$ be the first letter of $\ph(q_T)$. Since $w_2$ ends in $c_\ell$, it follows that  $b \neq (c_\ell)^{-1}$. Let $b' \in A$ be different from the letters $b,  (c_\ell)^{-1}$. Then it follows from the definitions that the word
$$
w_3 \equiv  w_2  \ph(q_T) b' \equiv  w_1 c_\ell^{m+2} \ph(q_T) b'
$$
is reduced and can be used as a desired word $w$ for the set $V_{k+1} =V_{k} \cup \{ v_{k+1} \}$.

Now consider the case when $p(v_{k+1})_+ \in B \subseteq  B(V_1, w_2)$. By the definition of $m$, the vertex $p(v_{k+1})_+$ can be joined in $B(V_1, w_2)$   with a vertex $u$ of a tree $T'$,   $T'  \subseteq F(V_1, w_2)$, by a path $q$ so that $|q| \le m+1$, where $u \not\in  B$, and $u$ is connected to a vertex in $B$ by an edge of $T'$. Consider a reduced word $w_4$ equal in $F(A)$ to $ w_1 c_\ell^{m+2} \ph(q)$ which we write in the form
$$
w_4 \overset {F(A)} {=}     w_1 c_\ell^{m+2} \ph(q)  .
$$

Note that at most $| q|$ letters of the reduced word $ w_1 c_\ell^{m+2} $ could cancel with those of $\ph(q) $. Since $| q| \le m+1$, the word
$w_4$ can still be used for the set $V_1$ in place of $w_2$. As above, we construct an irreducible $A$-graph  $ B(V_2, w_4)$ with  $\ph(p(v_i)) = w_4$ for every $i =1, \dots, k+1$. The path $p(v_{k+1})$ with  $p(v_{k+1})_- = v_{k+1}$ and $\ph(p(v_{k+1})) = z_{4}$ will have its terminal vertex $p(v_{k+1})_+ $ on a tree $T$ of $ F(V_1, w_4)$. Now we can argue as in the foregoing case. We pick a shortest path $q_T$ in $T$  such that  $(q_T)_- = p(v_{k+1})_+ $,   $(q_T)_+ \not\in B$,   $\dg (q_T)_+=1$, and consider a reduced  word $w_5$ such that
$$
w_5 =  w_4 \ph(q_T) c'    \overset {F(A)} {=}   w_1 c_\ell^{m+2}  \ph(q) \ph(q_T) c' ,
$$
where $c' \in A$ is now chosen so that  $c'$ is different from the first letter of $\ph(q_T)$ and from $b^{-1}$, where $b$ is the last letter of $w_4$.   As above, we see that $|q_T| >0$.
It is straightforward to verify that the graph $B(V_2, w_5)$  has all of the required properties and Lemma~\ref{BF} is proven.
\end{proof}

We now generalize Lemma~\ref{BF} to the situation with two subsets $V_1, V_2 \subseteq  VB$.

\begin{lemma}\label{BF2} Suppose that  $B$ is a finite  irreducible $A$-graph and  no connected component of $B$ is $A$-complete. Then, for every pair of subsets $V_1, V_2 \subseteq VB$, there are nonempty reduced words $w_1 = w_1(V_1, V_2)$, $w_2= w_2(V_1, V_2)$ over $A$ and there is  an irreducible $A$-graph   $B(V_1, w_1; V_2, w_2 )$ such that $B(V_1, w_1; V_2, w_2 )$ contains $B$, $B(V_1, w_1; V_2, w_2 ) = B \cup F(V_1, w_1; V_2, w_2 )$, where $F(V_1, w_1; V_2, w_2 )$ is a forest,   $B \cap F(V_1, w_1; V_2, w_2 ) =  V_{12}^*$,   where $ V_{12}^*  \subseteq VB$,  and
$
\barr(B(V_1, w_1; V_2, w_2)) = \barr(B ) .
$

Furthermore, for every vertex $v_i \in V_i$, $i =1,2$, there is a unique path $p(v_i)$ in
$B(V_1, w_1; V_2, w_2 )$ such that $p(v_i)_- = v_i$,   $\ph(p(v_i)) \equiv  w_i$,  $p(v_i)_+$ has degree one,  $p(v_i)_+ \not\in B$, and the sets $\{ p(v_1)_+ \mid v_1 \in V_1 \}$,  $\{ p(v_2)_+ \mid v_2 \in V_2 \}$
are disjoint.
\end{lemma}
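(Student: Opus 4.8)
The plan is to deduce Lemma~\ref{BF2} from Lemma~\ref{BF} applied to the union $V_1\cup V_2$, and then to \emph{bifurcate} the resulting family of paths by appending one carefully chosen last letter that tells $V_1$-paths and $V_2$-paths apart.

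If $V_1\cup V_2=\emptyset$ the assertion is vacuous: take $F(V_1,w_1;V_2,w_2)$ empty and $w_1,w_2$ arbitrary nonempty reduced words. So assume $V_1\cup V_2\neq\emptyset$. First I would apply Lemma~\ref{BF} to $B$ and the subset $V_1\cup V_2\subseteq VB$, obtaining a reduced word $w$ and an irreducible $A$-graph $B':=B(V_1\cup V_2,w)=B\cup F'$ with $F'$ a forest, $B\cap F'=V_{12}^{*}\subseteq VB$, $\barr(B')=\barr(B)$, and, for each $v\in V_1\cup V_2$, a unique path $p(v)$ in $B'$ with $p(v)_-=v$, $\ph(p(v))\equiv w$, $\dg p(v)_+=1$ and $p(v)_+\notin B$. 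A preliminary observation I would record is that the leaves $p(v)_+$, $v\in V_1\cup V_2$, are pairwise distinct: if $p(v)_+=p(v')_+$, then $p(v)^{-1}$ and $p(v')^{-1}$ are both paths in the irreducible graph $B'$ issuing from that vertex and labeled by the reduced word $\ph(p(v))^{-1}$, whence $p(v)^{-1}=p(v')^{-1}$ and $v=v'$.

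Next, let $c$ be the last letter of $w$ and pick distinct letters $b_1,b_2\in A$ with $b_1,b_2\neq c^{-1}$ (possible since $|A|=2n\geq 4$), and put $w_1:=wb_1$, $w_2:=wb_2$, which are nonempty and reduced. I would attach to $B'$, for every $v\in V_1$, a new pendant edge at $p(v)_+$ labeled $b_1$, and, for every $v\in V_2$, a new pendant edge at $p(v)_+$ labeled $b_2$ (if $v\in V_1\cap V_2$ one attaches both). At each $p(v)_+$ the only edge present has label $c$, hence outgoing label $c^{-1}$; since $b_1,b_2\neq c^{-1}$ and $b_1\neq b_2$, no fold arises and the resulting graph $B''$ is irreducible, so I set $B(V_1,w_1;V_2,w_2):=B''$. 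The asserted clauses then follow by bookkeeping: $B''=B\cup F''$ with $F'':=F'\cup(\text{the new edges})$ still a forest, $B\cap F''=V_{12}^{*}\subseteq VB$ (the new edges and their new endpoints avoid $B$), and $\barr(B'')=\barr(B')=\barr(B)$ (each pendant edge adds one vertex and one geometric edge). For $v_i\in V_i$ the concatenation of $p(v_i)$ with the $b_i$-labeled edge at $p(v_i)_+$ is a path from $v_i$ labeled $w_i$, hence the unique such path by irreducibility of $B''$ and reducedness of $w_i$; its terminal vertex is new, has degree one and avoids $B$. Finally, $\{p(v_1)_+\mid v_1\in V_1\}$ consists of terminal vertices of $b_1$-labeled edges and $\{p(v_2)_+\mid v_2\in V_2\}$ of terminal vertices of $b_2$-labeled edges, so these sets are disjoint because $b_1\neq b_2$.

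The step that needs an idea rather than bookkeeping is the separation of the two families. The naive iteration --- apply Lemma~\ref{BF} first to $V_1$, then to $V_2$ inside the enlarged graph --- can fail, since the second family of attached paths may fold into the forest produced by the first step and even terminate exactly at some vertices $p(v_1)_+$, which would raise their degree above one and destroy disjointness. Handling $V_1\cup V_2$ in a single pass, so that all branch points already lie in $VB$, and only then splitting the common word $w$ by its final letter, is precisely what circumvents this; the remaining verifications reduce to the Euler-characteristic and unique-path arguments already used in the proof of Lemma~\ref{BF}.
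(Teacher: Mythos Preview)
Your proof is correct, and it takes a genuinely different route from the paper's. The paper applies Lemma~\ref{BF} \emph{separately} to $V_1$ and to $V_2$, obtaining words $w_1',w_2'$, then sets $w_i := w_i'\,b_i^{k_2}$ with $k_2=|w_1'|+|w_2'|$ and $b_1\notin\{b_2,b_2^{-1}\}$; the long $b_i$-tails are what guarantee, after folding everything onto $B$, that the two families of paths cannot merge near their ends. You instead apply Lemma~\ref{BF} \emph{once} to $V_1\cup V_2$, so that all the terminal vertices $p(v)_+$ are already pairwise distinct degree-one leaves outside $B$, and then bifurcate by a single letter $b_1\neq b_2$ (both $\neq c^{-1}$). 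This is cleaner: no folding is needed after the bifurcation step, no length estimate is required, and you do not need the extra condition $b_1\neq b_2^{-1}$. The paper's approach is perhaps more modular (each $V_i$ is processed independently before combining), but yours gives a shorter argument with fewer moving parts. One small remark: your disjointness justification ``because $b_1\neq b_2$'' is really the observation that each new terminal vertex has degree one, and the unique edge at a vertex in $\{p(v_1)_+\}$ carries label $b_1^{-1}$ while at a vertex in $\{p(v_2)_+\}$ it carries label $b_2^{-1}$; this is fine, but you might say it that way.
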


\begin{proof} First we apply Lemma~\ref{BF} to the set $V_1$ to obtain a word $w_1' \in F(A)$ and a graph $B(V_1, w_1')$ with properties of Lemma~\ref{BF}. We also apply Lemma~\ref{BF} to the set $V_2$ to obtain a word $w_2' \in F(A)$ and a graph $B(V_2, w_2')$ with properties of Lemma~\ref{BF}. Since $|A| \ge 4$, it follows that there are letters $b_1, b_2 \in A$ so that the words $w_1' b_1$, $w_2' b_2$ are reduced and $b_1 \not\in \{ b_2,  (b_2)^{-1} \}$. Let $k_2 = |w_1'| + |w_2'|$. Define the words $w_1 = w_1' b_1^{k_2}$,  $w_2 = w_2' b_2^{k_2}$. Now we attach paths $p_i(v_i)$ with $\ph(p_i(v_i)) = w_i$, over all $v_i \in V_i$, $i =1,2$, to the graph $B$ by identifying the vertices $p_i(v_i)_-$ and $v_i \in V_i \subseteq B$. Making foldings, we obtain an irreducible $A$-graph  $B(V_1, w_1; V_2, w_2)$. It is easy to see that the graph
$B(V_1, w_1; V_2, w_2)$ has all of the properties stated in Lemma~\ref{BF2}. \end{proof}

As above, let $B$ be a finite irreducible $A$-graph and $f \in A$. Let
$$
E_fB := \{ e \mid  e \in EB, \ph(e) = f  \}
$$
denote the set of all edges $e \in EB$ such that  $\ph(e) = f$ and, analogously, denote
$$
E_{f^{-1}}B := \{ e \mid e \in EB, \ph(e) =  f^{-1}  \} .
$$

Let $B_f$ denote the graph $ B \setminus (E_fB \vee   E_{f^{-1}}B )$,
 $$
 B_f := B \setminus (E_fB \vee   E_{f^{-1}}B ) ,
 $$
 and, similarly, denote
 $$
 A_f :=   A \setminus \{ f,  f^{-1} \} .
 $$
The restriction of the immersion $\ph_0 : B \to A_0$ to $B_f$ denote  $\ph_{0, f} : B_f \to A_{0, f}$, where  $V A_{0, f} = \{ 0_f \}$ and
 $E A_{0, f} = A_f$. Denote
 $$
 (E_f B)_- :=
\{ e_- \mid  e \in E_f B  \} , \qquad (  E_f B)_+ :=
\{ e_+ \mid  e \in  E_f B  \} .
 $$

 A connected component of the graph $B_f$ is called {\em $A_f$-complete } if the degree of its every vertex is equal to $|A_f| = 2n-2$. Otherwise, a connected component of the graph $B_f$ is called {\em $A_f$-incomplete}.

 Let $B_f^{\textsf{com}}$ denote the set of all  $A_f$-complete  connected  components of  $B_f$ and $B_f^{\textsf{inc}}$ denote the set of all  $A_f$-incomplete  connected  components of  $B_f$. We also denote
  $$
 V_1^{\textsf{inc}}(f_-) :=   B_f^{\textsf{inc}} \cap (E_f B)_- , \qquad
 V_2^{\textsf{inc}}(f_+) :=   B_f^{\textsf{inc}} \cap (E_f B)_+ .
  $$

 Let us apply Lemma~\ref{BF2} to the irreducible $A_f$-graph  $B_f^{\textsf{inc}}$ and its sets of vertices $  V_1^{\textsf{inc}}(f_-) $ and   $V_2^{\textsf{inc}}(f_+)$. As a result, we obtain two reduced words $w_1, w_2$ over $A_f$ and an irreducible $A_f$-graph
 $$
 B_f^{\textsf{inc}}( V_1^{\textsf{inc}}(f_-), w_1; V_2^{\textsf{inc}}(f_+), w_2)
 $$
 with the properties of  Lemma~\ref{BF2}.
\smallskip

 Now we construct more labeled $A$-graphs in the  following fashion. For every edge $e \in E_f B$, we consider a new graph which is a path $p_1(e) e p_2(e)^{-1}$ so that $\ph(p_1(e) ) \equiv w_1$,  $\ph(e ) = f$,  $\ph(p_2(e) )  \equiv w_2$. We replace every $e \in E_f B$ in $B$ by such a path $p_1(e) e p_2(e)^{-1}$ and then do edge foldings to get an  irreducible $A$-graph which we denote $B[f]$. We will call  this graph $B[f]$, obtained by the described application of Lemma~\ref{BF2},  an {\em $f$-treatment} of $B$. It is clear that the graph $B[f]$ can be thought of as the image of $B$ under the automorphism of the free group $F(A)$ that takes $f$ to $w_1 f w_2^{-1}$ and takes $a$ to $a$ if $a \in A_f$. We also observe that the  connected components of $B_f$ and those of $B[f]_f$ are in the natural bijective correspondence. Moreover, it follows from the definitions and Lemma~\ref{BF2} that a connected component of $B_f$  is $A_f$-complete if and only if its image in $B[f]_f$ is  $A_f$-complete and, if they are both $A_f$-complete, then they are identical.

 Suppose $c \in A, c \not\in \{f, f^{-1} \}$. Using the notation analogous to what we introduced above for $f$, we consider the graph
 $$
 B[f]_c := B[f] \setminus (E_cB[f] \vee E_{c^{-1}} B[f]) .
 $$
 As above, we consider a partition
 $$
 B[f]_c = B[f]_c^{\textsf{inc}} \vee B[f]_c^{\textsf{com}}  ,
 $$
where $B[f]_c^{\textsf{inc}} $ is the union of $A_c$-incomplete connected components of $B[f]_c$ and $ B[f]_c^{\textsf{com}}$ is the union of $A_c$-complete connected  components of $B[f]_c$.

 \begin{lemma}\label{CAB} Suppose $B$ is  a finite irreducible $A$-graph, $|A| \ge 6$,  no connected component of $B$ is a complete $A$-graph, $f \in A$, and $B[f]$ is an $f$-treatment of $B$. Then, for every $c \in A \setminus  \{ f, f^{-1} \}$, considering the sets $V(   B[f]_c^{\textsf{com}} )$, $V(B[f]_f^{\textsf{com}} )$ as subsets of $V B[f]$, one has  that
 $$
 V(   B[f]_c^{\textsf{com}} ) {\subseteq}  V(B[f]_f^{\textsf{com}} )
 $$
 and either $V(   B[f]_c^{\textsf{com}} )= V(B[f]_f^{\textsf{com}} ) = \varnothing$ or $V(   B[f]_c^{\textsf{com}} ) \ne V(B[f]_f^{\textsf{com}} )$.   Moreover,
$$ | V(   B[f]_f^{\textsf{com}} ) | =  | V(   B_f^{\textsf{com}} ) |  . $$
 \end{lemma}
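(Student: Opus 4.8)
The plan is to establish the three assertions in the order they are stated. The cardinality identity $|V(B[f]_f^{\textsf{com}})| = |V(B_f^{\textsf{com}})|$ I would simply read off the two remarks made just before the statement: the connected components of $B_f$ and of $B[f]_f$ are in a natural bijective correspondence under which $A_f$-complete components go to $A_f$-complete components, and two corresponding $A_f$-complete components are identical. Hence $V(B[f]_f^{\textsf{com}})$ and $V(B_f^{\textsf{com}})$ are not merely equinumerous but literally the same subset of $VB[f]$. I would also record, for use below, that an $A_f$-complete component $C'$ of $B_f$ sits inside $B[f]$ unchanged: the paths $p_1(e), p_2(e)$ attached at a vertex of $C'$ during the $f$-treatment carry labels over $A_f$, and since $C'$ is an $A_f$-covering they fold back into $C'$ without adding any vertex or edge.

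The crux is a structural description of the vertices of $B[f]$ that are incident to an edge labelled $f$ or $f^{-1}$. Every such edge is (the image under the foldings of) the middle edge of a replacement path $p_1(e)\, e\, p_2(e)^{-1}$ with $e \in E_f B$, so it joins the two vertices $p_1(e)_+$ and $p_2(e)_+$ of $B[f]$. I would show that each of these endpoints is of exactly one of two types: (i) it lies in an $A_f$-complete component of $B_f$ — which happens precisely when the old endpoint $e_-$, resp.\ $e_+$, lay in such a component, because $w_1, w_2$ are words over $A_f$ and an $A_f$-complete component, being an $A_f$-covering, absorbs the relevant $w_i$-path; or (ii) it is one of the degree-one endpoints $p(v)_+$, $v \in V_1^{\textsf{inc}}(f_-) \cup V_2^{\textsf{inc}}(f_+)$, of the forest produced by the application of Lemma~\ref{BF2} to $B_f^{\textsf{inc}}$, in which case — using that these endpoints have degree one in the forest, that the two families $\{p(v)_+\}$ are disjoint, and that $B$ is irreducible — the vertex has degree exactly $2$ in $B[f]$. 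The upshot I want is: any vertex of $B[f]$ incident to an $f^{\pm 1}$-edge either lies in $V(B_f^{\textsf{com}})$ or has degree $2$ in $B[f]$. I expect this bookkeeping — matching replacement paths to their landing places and ruling out that a new forest leaf acquires extra incident edges in $B[f]$ — to be the main obstacle; the rest is quick.

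Granting the structural step, the inclusion follows at once. Let $v \in V(B[f]_c^{\textsf{com}})$ and let $D$ be the connected component of $v$ in $B[f]_c$; then $D$ is a finite covering of the bouquet with edge set $A_c$. Since $c \notin \{f, f^{-1}\}$ we have $f, f^{-1} \in A_c$, so every vertex of $D$ is incident to an $f^{\pm 1}$-edge and has degree $|A_c| = |A| - 2 \ge 4$ in $B[f]$ — and here the hypothesis $|A| \ge 6$ enters. By the structural step such a vertex cannot have degree $2$, hence it lies in $V(B_f^{\textsf{com}}) = V(B[f]_f^{\textsf{com}})$; in particular $v$ does, which proves $V(B[f]_c^{\textsf{com}}) \subseteq V(B[f]_f^{\textsf{com}})$.

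For the strict inequality: if $V(B[f]_f^{\textsf{com}}) = \varnothing$, the inclusion forces $V(B[f]_c^{\textsf{com}}) = \varnothing$, which is the first alternative. Otherwise put $P := V(B[f]_f^{\textsf{com}}) = V(B_f^{\textsf{com}}) \ne \varnothing$ and suppose, toward a contradiction, that $V(B[f]_c^{\textsf{com}}) = P$. I would then consider the subgraph $N := B_f^{\textsf{com}} \cup B[f]_c^{\textsf{com}}$ of $B[f]$ (the first summand is a genuine subgraph of $B[f]$ by the remark in the first paragraph, the second because $B[f]_c \subseteq B[f]$), which has vertex set $P$. At a vertex $v \in P$ the first summand contributes one edge for each label in $A_f$ and the second one edge for each label in $A_c$; by irreducibility of $B[f]$ these agree on labels in $A_f \cap A_c$, and since $c \notin \{f, f^{-1}\}$ we have $A_f \cup A_c = A$, so $v$ has degree $|A|$ in $N$. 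Hence every connected component of $N$ is a finite covering of the bouquet with edge set $A$, i.e.\ a complete $A$-graph, and $N \ne \varnothing$; moreover, since each such vertex already realises the maximal degree $|A|$ in $B[f]$, the components of $N$ are full components of $B[f]$. So $B[f]$ has a connected component that is a complete $A$-graph. But $B[f]$ is the image of $B$ under the automorphism of $F(A)$ sending $f \mapsto w_1 f w_2^{-1}$ and fixing every letter of $A_f$, and such an automorphism preserves the index in $F(A)$ of a subgroup; since no connected component of $B$ is a complete $A$-graph, neither is any connected component of $B[f]$, a contradiction. Therefore $V(B[f]_c^{\textsf{com}}) \ne V(B[f]_f^{\textsf{com}})$, which would complete the proof.
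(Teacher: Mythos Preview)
Your proposal is correct and follows essentially the same route as the paper: the cardinality identity is read off the remarks preceding the lemma, the inclusion is obtained from the dichotomy for endpoints of $f^{\pm 1}$-edges in $B[f]$ coming from Lemma~\ref{BF2}, and the strict inclusion is derived by showing that equality would force an $A$-complete component of $B[f]$, contradicting the hypothesis via the automorphism description of the $f$-treatment. Your structural step is in fact more carefully stated than the paper's one-line appeal to Lemma~\ref{BF2}; the paper's sentence ``no vertex $u$ of $B[f]$ has distinct edges $e_1,e_2$ starting at $v$ with $\varphi(e_1)=\varphi(e_2^{-1})=f$'' should really be read as applying only to vertices outside $V(B[f]_f^{\textsf{com}})$, which is exactly the dichotomy you spell out.
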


\begin{proof} As was observed above, connected components of $B_f$ and those of $B[f]_f$ are in the natural bijective correspondence, moreover, a connected component $C$ of $B_f$  is $A_f$-complete if and only if its image in $B[f]_f$ is  $A_f$-complete and, if, they are both $A_f$-complete, then they are identical. Hence, the $A_f$-graphs
$B_f^{\textsf{com}} $ and  $B[f]_f^{\textsf{com}} $ are isomorphic and
$ | V(   B[f]_f^{\textsf{com}} ) | =  | V(   B_f^{\textsf{com}} ) |$.

Now suppose $v \in  V (B[f]_c^{\textsf{com}})$. Considering $v$ as a vertex of $B[f]$, we see that there are edges $e_1, e_2$ in $B[f]$ starting at $v$ such that  $\ph(e_1) = \ph(e_2^{-1}) = f$. However, it follows from
Lemma~\ref{BF2} that no vertex $u$ of $B[f]$  has distinct edges $e_1, e_2$ starting at $v$ such that $\ph(e_1) = \ph(e_2^{-1}) = f$.  This  shows that  $v \in V( B[f]_f^{\textsf{com}} )$, whence,
$V(   B[f]_c^{\textsf{com}} ) {\subseteq}  V(B[f]_f^{\textsf{com}} ) $, as desired.

Finally, assume that    $ V( B[f]_c^{\textsf{com}} ) = V( B[f]_f^{\textsf{com}} ) \ne \varnothing$. Then for every vertex $v \in V( B[f]_c^{\textsf{com}} )  {\subseteq}  V(B[f])    $ we have $\dg v = 2n-2$ in both graphs $B[f]_f^{\textsf{com}}$ and $ B[f]_c^{\textsf{com}} $. Hence, $\dg v = 2n$ in $B[f]$ and so $ B[f]^{\textsf{com}}  $ contains all of the vertices of $ B[f]_c^{\textsf{com}} \neq \varnothing$. This, however, means
that $B[f]$ and, hence, $B$ contains an $A$-complete connected component. This contradiction to  the assumption  that $B^{\textsf{com}}$  is empty proves that  $V(   B[f]_c^{\textsf{com}} ) \underset {\ne}{\subset}  V(B[f]_f^{\textsf{com}} )$
whenever $ V(B[f]_f^{\textsf{com}} )$ is not empty, as desired.
 \end{proof}

  \begin{lemma}\label{REF} Suppose $B$ is  a finite irreducible $A$-graph, $|A| \ge 6$,  and no connected component of $B$ is a complete $A$-graph. Then there exists a finite sequence of $f_1$-, $\dots, f_{\ell+1}$-treatments of the graph $B$  so that the resulting graph, denoted  $B[f_1, \dots, f_{\ell+1}]$, has the following property. For $f \in A$, denote
  $$
B[f_1, \dots, f_{\ell+1}]_f := B[f_1, \dots, f_{\ell+1}] \setminus  (E_f(B[f_1, \dots, f_{\ell+1}]) \vee E_{f^{-1}}(B[f_1, \dots, f_{\ell+1}])  ) .
$$
For each $f \in A$,  every connected component of $B[f_1, \dots, f_{\ell+1}]_f $ is   $A_f$-incomplete. In addition, for every edge $e \in E(B[f_1, \dots, f_{\ell+1}])$ such that $\ph(e)= f_{\ell+1}$, one has
$\dg e_-= $ $\dg  e_+= 2$  and, if $h_1(e) e h_2(e)$ is a reduced path
in $B[f_1, \dots, f_{\ell+1}]$, where $h_1(e)$, $h_2(e) $ are edges,
then the labels $\ph(h_1(e))$, $\ph(h_2(e))$  are independent of $e$.
\end{lemma}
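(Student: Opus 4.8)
\medskip
\noindent\textbf{Proof plan.}
The statement bundles two claims, which I would treat in turn. The first is that some finite sequence of treatments yields a graph $B'$ in which, for \emph{every} $f\in A$ at once, no connected component of $B'_f$ is $A_f$-complete. The second is that one further treatment, of a letter $f_{\ell+1}$, may then be appended so that all $f_{\ell+1}$-edges of the final graph take on the stated uniform local shape. The second costs nothing against the first: by Lemma~\ref{CAB}, if all the $C_g$ ($g\in A$) have only $A_g$-incomplete components then so do all the $C[f]_g$ after any $f$-treatment, since $|V(C[f]_f^{\textsf{com}})|=|V(C_f^{\textsf{com}})|=0$ and $V(C[f]_g^{\textsf{com}})\subseteq V(C[f]_f^{\textsf{com}})=\varnothing$ for $g\ne f^{\pm1}$.

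For the first claim I would attach to a finite irreducible $A$-graph $C$ having no $A$-complete component the quantity $m(C):=\min_{g\in A}|V(C_g^{\textsf{com}})|$, and show that a single treatment either lowers $m$ by at least $1$ (when $m(C)\ge1$) or else already forces all $V(C_g^{\textsf{com}})$ empty (when $m(C)=0$). Iterating brings $m$ down to $0$ in finitely many treatments, and one more then produces the graph $B'$ required by the first claim. If $m(C)\ge1$, every $C_g$ has an $A_g$-complete component; pick $f$ with $|V(C_f^{\textsf{com}})|=m(C)$. Then $C_f$ is not entirely $A_f$-complete: otherwise $m(C)=|VC|$, hence $|V(C_g^{\textsf{com}})|=|VC|$ for every $g$, and then, choosing for any two non-inverse labels a letter $g$ distinct from all four of their signed forms (possible since $|A|\ge6$), the $A_g$-completeness of every vertex of $C_g$ would force every vertex of $C$ to carry all $2n$ labels, making $C$ $A$-complete — contrary to hypothesis. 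Hence $C_f^{\textsf{inc}}\ne\varnothing$ and the $f$-treatment is defined. Applying Lemma~\ref{CAB} to $C[f]$ gives $|V(C[f]_f^{\textsf{com}})|=m(C)$ and $V(C[f]_c^{\textsf{com}})\subsetneq V(C[f]_f^{\textsf{com}})$ for every $c\ne f^{\pm1}$ (strict, the larger set being nonempty), so $|V(C[f]_c^{\textsf{com}})|\le m(C)-1$ and therefore $m(C[f])\le m(C)-1$. If instead $m(C)=0$, some $c$ has $V(C_c^{\textsf{com}})=\varnothing$, so $C_c^{\textsf{inc}}=C_c\ne\varnothing$ and the $c$-treatment is defined; Lemma~\ref{CAB} applied to $C[c]$ then gives $|V(C[c]_c^{\textsf{com}})|=0$ and $V(C[c]_d^{\textsf{com}})\subseteq V(C[c]_c^{\textsf{com}})=\varnothing$ for all $d\ne c^{\pm1}$, i.e.\ all complete parts of $C[c]$ are empty. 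At every stage one needs finiteness, irreducibility, and absence of $A$-complete components of the current graph; the first two are immediate from the construction of a treatment, and the third persists because a treatment is realized by an automorphism of $F(A)$ (sending $f\mapsto w_1fw_2^{-1}$ and fixing $A_f$), which preserves finite-index subgroups and hence the presence of $A$-complete components.

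For the second claim, let $B'=B[f_1,\dots,f_\ell]$ be a graph from the first claim and pick $f:=f_{\ell+1}$ with $E_fB'\ne\varnothing$; perform an $f$-treatment. Since every component of $B'_f$ is $A_f$-incomplete, the vertex sets fed into Lemma~\ref{BF2} are $V_1^{\textsf{inc}}(f_-)=(E_fB')_-$ and $V_2^{\textsf{inc}}(f_+)=(E_fB')_+$ in full, and irreducibility of $B'$ makes $e\mapsto e_-$ and $e\mapsto e_+$ injective on $E_fB'$. Lemma~\ref{BF2} supplies nonempty reduced words $w_1,w_2$ over $A_f$ and, for each $e\in E_fB'$, paths $p_1(e),p_2(e)$ of labels $w_1,w_2$ starting at $e_-,e_+$, with terminal vertices lying outside $B'_f$, of degree one, pairwise distinct within each family, and with the two families of terminal vertices disjoint. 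In $B'[f]$ each $e$ is replaced by $p_1(e)\,e\,p_2(e)^{-1}$ (with $e$ now the middle $f$-edge, of endpoints $p_1(e)_+$ and $p_2(e)_+$), followed by foldings; but no two of the new $f$-edges nor their inverses issue from a common vertex — the only candidates are exactly those degree-one terminal vertices — and such a vertex $p_1(e)_+$ carries besides $e$ only the last edge of $p_1(e)$, whose label lies in $A_f$, so the foldings do not touch the new $f$-edges. Hence each $f$-edge $e$ of $B'[f]$ has $\dg e_-=\dg e_+=2$, the unique edge other than $e$ at $e_-$ is the last edge of $p_1(e)$ — label the last letter of $w_1$ — and the unique edge other than $e$ at $e_+$ is the last edge of $p_2(e)$, contributing the inverse of the last letter of $w_2$ to any outgoing path from $e_+$. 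Since these labels lie in $A_f$ the path $h_1(e)\,e\,h_2(e)$ is genuinely reduced, and $\ph(h_1(e)),\ph(h_2(e))$ equal the last letter of $w_1$ and the inverse of the last letter of $w_2$ — independent of $e$, as demanded. This coexists with the first claim because, as noted, the last $f$-treatment keeps every $B'[f]_g$ free of $A_g$-complete components.

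The crux is the first claim: producing a monotone quantity. Lemma~\ref{CAB} is the engine, funnelling all ``near-completeness'' onto the treated letter $f$ without raising its level there, which forces $m$ to drop at the next treatment. The fussy part is keeping each treatment non-degenerate and the hypotheses of Lemma~\ref{CAB} — above all ``no $A$-complete component'', which is where $|A|\ge6$ enters — valid throughout the iteration, and, for the second claim, the bookkeeping with foldings ensuring that both endpoints of every new $f_{\ell+1}$-edge end up of degree exactly two.
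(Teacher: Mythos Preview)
Your proof is correct and follows essentially the same approach as the paper: iterate treatments, using Lemma~\ref{CAB} to drive a nonnegative integer quantity to zero, then append one final treatment for the degree-$2$ edge property. The only difference is cosmetic bookkeeping --- you track $m(C)=\min_g|V(C_g^{\textsf{com}})|$ and treat a minimising letter, whereas the paper simply treats any $f_{j+1}\notin\{f_j,f_j^{-1}\}$ and observes that the sequence $|V(B[f_1,\dots,f_j]_{f_j}^{\textsf{com}})|$ is strictly decreasing in~$j$; the engine and the overall structure are identical.
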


\begin{proof}  Pick an arbitrary letter $f_1 \in A$ and do an $f_1$-treatment of the graph $B$.
Assuming that $B[f_1]_{f_1}^{\textsf{com}} \ne \varnothing$, we can see from
Lemma~\ref{CAB} that  the result is a new irreducible $A$-graph $B[f_1]$ such that, for every $c \in A$, where $c \not\in \{ f_1,  f_1^{-1} \}$, we have
$$
 V(   B[f_1]_c^{\textsf{com}} ) \underset {\ne}{\subset}  V(B[f_1]_{f_1}^{\textsf{com}} ) , \qquad  | V(   B[f_1]_{f_1}^{\textsf{com}} ) | =  | V(   B_{f_1}^{\textsf{com}} ) |  .
$$

Picking an edge $f_2$ such that $f_2 \in A$, $f_2 \not\in \{ f_1, f_1^{-1} \}$, we will do a second $f_2$-treatment of the graph $B[f_1]$ and get a new irreducible $A$-graph $B[f_1, f_2] := B[f_1][f_2] $.

Assuming that $B[f_1, f_2]_{f_2}^{\textsf{com}} \ne \varnothing$, we obtain from
Lemma~\ref{CAB} that, for every $c \in A$ such that $c \not\in \{ f_2, f_2^{-1} \}$, we have
\begin{gather}\notag
 V(   B[f_1,f_2]_c^{\textsf{com}} ) \underset {\ne}{\subset}  V(B[f_1, f_2]_{f_2}^{\textsf{com}} )  ,  \\ \notag
 | V(   B[f_1, f_2]_{f_2}^{\textsf{com}} ) | = | V(   B[f_1]_{f_2}^{\textsf{com}} ) | <  | V(   B[f_1]_{f_1}^{\textsf{com}} ) | = | V(   B_{f_1}^{\textsf{com}} ) |  .
\end{gather}

Then we pick an edge $f_3$, where $f_3 \in A$, $f_3 \not\in \{ f_2, f_2^{-1} \}$, and do a third $f_3$-treatment of the graph $B[f_1, f_2]$ to construct  an irreducible $A$-graph $B[f_1, f_2, f_3] = B[f_1, f_2][f_3] $ with further decreased $ | V(   B[f_1, f_2, f_3]_{f_3}^{\textsf{com}} ) |$ and so on.  Assuming for each $j =1, \dots, i$ that $B[f_1, \ldots, f_j]_{f_j}^{\textsf{com}} \ne \varnothing$, we obtain from
Lemma~\ref{CAB} that,  for every $c \in A$ such that $c \not\in \{ f_j, f_j^{-1} \}$, it is true that
\begin{gather*}
 V(   B[f_1, \ldots, f_j]_c^{\textsf{com}} ) \underset {\ne}{\subset}  V(B[f_1, \ldots, f_j]_{f_j}^{\textsf{com}} )  , \\
 | V(   B[f_1, \ldots, f_j]_{f_j}^{\textsf{com}} ) | < | V(   B[f_1, \ldots, f_{j-1}]_{f_{j-1}}^{\textsf{com}} ) |  , \\
 | V(   B[f_1, \ldots, f_j]_{f_j}^{\textsf{com}} ) | = | V(   B[f_1, \ldots, f_{j-1}]_{f_j}^{\textsf{com}} ) |  .
\end{gather*}
Hence, it follows from these equalities and inequalities that
\begin{align*}
| V(   B[f_1, \ldots, f_{i+1}]_{f_{i+1}}^{\textsf{com}} ) | & < \ldots <
| V(   B[f_1, \ldots, f_{j}]_{f_{j}}^{\textsf{com}} ) | < \ldots  \\ & < | V(   B[f_1, f_2]_{f_2}^{\textsf{com}} ) | <  | V(   B[f_1]_{f_1}^{\textsf{com}} ) | .
\end{align*}

Therefore, the graph $ B[f_1,\dots, f_{i+1}]^{\textsf{com}}_{f_{i+1}} $ will eventually become empty and then \\ $B[f_1,\dots, f_{i+1}]_c^{\textsf{com}}$ will be empty for every  $c \in A$.
Thus we may suppose  that $ B[f_1,\dots, f_{\ell}]_{f_{\ell}}^{\textsf{com}} $ is empty for some $\ell \ge 1$.

We will do one more $f_{\ell+1}$-treatment of the graph $B[f_1,\dots, f_{\ell} ]$. Pick an edge $f_{\ell+1}$ such that  $f_{\ell+1} \in A$, $f_{\ell+1} \not\in \{ f_\ell, f_\ell^{-1} \}$,  do an $f_{\ell+1}$-treatment of the graph $B[f_1, \dots, f_{\ell}]$ and obtain an irreducible $A$-graph $B[f_1, \dots, f_{\ell+1}] := B[f_1, \dots, f_{\ell}  ][f_{\ell+1}] $. It follows from Lemma~\ref{CAB} that, for every $c \in A$, the set $B[f_1,\dots, f_{\ell+1}]_c^{\textsf{com}}$ is still empty, in particular,
$B[f_1,\dots, f_{\ell+1}]_f^{\textsf{com}} = \varnothing$. In addition, by the definitions and Lemma~\ref{BF2}, we will also have the claimed property of  edges $e \in EB[f_1, \dots, f_{\ell+1} ]$ such that $\ph(e)= f_{\ell+1}$.
\end{proof}

\begin{lemma}\label{wd4} Suppose $|A| \ge 4$, $d_1, d_2 \in A$ and $w$ is a nonempty reduced word over $A$. Then there are letters $b_1, b_2 \in A$ such that the word
$d_1 (b_1 w b_2)^2 d_2$ is reduced. Furthermore, let $c \in A$, $c \not\in  \{ b_2,  b_2^{-1} \}$, and $n_w := |w| +1$. Then the word
\begin{gather}\label{w4}
z_4(w) := b_1 w b_2 c^{n_w +1} b_2 \,   b_1 w b_2 c^{n_w+2} b_2 \,  b_1 w b_2 c^{n_w+3} b_2 \,  b_1 w b_2 c^{n_w+4} b_2
\end{gather}
has the following properties.  If  $(b_2 c^{n_w+i} b_2)^k$, where $k = \pm 1$, $i =1,2,3,4$, is a subword of $z_4(w)$, then $k = 1$ and the location of the subword $b_2 c^{n_w+i} b_2$ is {\em standard}, i.e., it is the suffix of length $n_w+i+2$ of the $i$th syllabus $b_1 w b_2 c^{n_w+i} b_2$ of
 $z_4(w)$. Moreover, the word $d_1 z_4(w) d_2$ is reduced.
\end{lemma}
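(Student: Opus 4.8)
The plan is to choose the two new letters $b_1,b_2$ so as to defeat every cancelling junction, and then to control occurrences of $b_2c^{n_w+i}b_2$ by a run-length argument that exploits the fact that $|w|=n_w-1$ is strictly smaller than each exponent $n_w+i$ appearing in $z_4(w)$. Since $|A|\ge4$, I would first pick $b_2\in A$ different from $d_2^{-1}$ and from the inverse of the last letter of $w$ (at most two exclusions), and then pick $b_1\in A$ different from $d_1^{-1}$, from $b_2^{-1}$, and from the inverse of the first letter of $w$ (at most three exclusions). With these choices every junction of $d_1b_1wb_2b_1wb_2d_2$ is non-cancelling: $d_1b_1$ because $b_1\ne d_1^{-1}$, the junctions $b_1w$ and $wb_2$ by the choices relative to the ends of the reduced word $w$, $b_2b_1$ because $b_1\ne b_2^{-1}$, and $b_2d_2$ because $b_2\ne d_2^{-1}$; hence $d_1(b_1wb_2)^2d_2$ is reduced. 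When the letter $c\notin\{b_2,b_2^{-1}\}$ is given and $n_w:=|w|+1$, the same list of junctions, together with $b_2\ne c^{\pm1}$ (so $b_2c$ and $cb_2$ do not cancel) and the fact that $w$ and each power $c^{n_w+i}$ are reduced, shows $z_4(w)$ is reduced; and since $z_4(w)$ begins with $b_1$ and ends with $b_2$ while $d_1\ne b_1^{-1}$ and $d_2\ne b_2^{-1}$, the word $d_1z_4(w)d_2$ is reduced as well.

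For the ``standard location'' statement I would establish the following combinatorial claim about $z_4(w)$, where by a \emph{$c$-run} I mean a maximal subword of the form $c^m$, $m\ge1$, and similarly for $c^{-1}$: the only $c$-runs of $z_4(w)$ of length $\ge n_w+1$ are the four displayed blocks $c^{n_w+1},c^{n_w+2},c^{n_w+3},c^{n_w+4}$, the $j$th of which lies in the $j$th syllable $b_1wb_2c^{n_w+j}b_2$ and has length exactly $n_w+j$; and $z_4(w)$ has no $c^{-1}$-run of length $\ge n_w+1$. Indeed, since $b_2\ne c^{\pm1}$, no $c$-run or $c^{-1}$-run can cross an occurrence of the letter $b_2$, so each lies inside a single maximal $b_2$-free segment of $z_4(w)$. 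Reading off $z_4(w)=b_1wb_2c^{n_w+1}b_2\,b_1wb_2c^{n_w+2}b_2\,b_1wb_2c^{n_w+3}b_2\,b_1wb_2c^{n_w+4}b_2$, every maximal $b_2$-free segment either equals one of the blocks $c^{n_w+j}$ (which is flanked by $b_2$ on both sides) or lies within one of the four copies of $b_1w$, and so has length at most $|b_1w|=n_w$. A $c^{-1}$-run cannot sit inside a $c$-block, hence has length $\le n_w$; and a $c$-run of length $\ge n_w+1$ cannot sit inside a $b_1w$-segment, hence equals one of the blocks. (Note that $w$ may contain $c$ or $c^{-1}$, and $b_1$ may equal $c$ or $c^{-1}$, since $c$ is chosen only after $b_1,b_2$; none of this matters, because all such occurrences lie in the $b_1w$-segments, which remain of length $n_w$.)

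Granting the claim, suppose $(b_2c^{n_w+i}b_2)^k$ with $k=\pm1$ and $i\in\{1,2,3,4\}$ is a subword of $z_4(w)$. If $k=-1$ the subword is $b_2^{-1}c^{-(n_w+i)}b_2^{-1}$ and contains a $c^{-1}$-run of length $n_w+i\ge n_w+1$, contradicting the claim; hence $k=1$. For $k=1$ the factor $c^{n_w+i}$ of the occurrence lies in a $c$-run of length $\ge n_w+1$, which by the claim is a block $c^{n_w+j}$ of some syllable $j$, forcing $i\le j$; and since this block is flanked by $b_2$ on both sides, the two flanking $b_2$'s of the occurrence force $c^{n_w+i}$ to coincide with the entire block — a $b_2$ immediately before the first of these $c$'s, and immediately after the last, can occur only at the two ends of the run — whence $i=j$ and the occurrence is precisely the suffix $b_2c^{n_w+i}b_2$ of length $n_w+i+2$ of the $i$th syllable $b_1wb_2c^{n_w+i}b_2$; this location is moreover unique because the four blocks have pairwise distinct lengths. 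I expect the one point genuinely needing care to be this last matching of the flanking $b_2$'s, that is, ruling out that an occurrence of $b_2c^{n_w+i}b_2$ is pieced together from a proper interior stretch of a longer run; once the run bookkeeping of the claim in the previous paragraph is in place, this is routine, and the remaining verifications (feasibility of the choices of $b_1,b_2$ and the reducedness checks) are immediate from $|A|\ge4$.
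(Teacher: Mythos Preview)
Your argument is correct and follows essentially the same line as the paper's proof; the second half (the run-length analysis showing that any occurrence of $(b_2c^{n_w+i}b_2)^{\pm1}$ must be standard) is exactly the paper's reasoning, only written out in more detail than the paper's one-sentence appeal to ``the maximal power of $c$ that can occur in $b_1wb_2$ is $c^{n_w}$.'' The one small methodological difference is in producing $b_1,b_2$: the paper argues by counting---at least two admissible $x$ with $d_1xw$ reduced, at least two admissible $y$ with $wyd_2$ reduced, and for each $x$ at most one bad $y=x^{-1}$---whereas you construct $b_2$ and then $b_1$ directly by listing the at most two, respectively three, forbidden letters; both are immediate from $|A|\ge4$ and yield the same conclusion.
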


\begin{proof} Since  $|A| \ge 4$, there are at least two letters $x \in A$ such that  the word $d_1 x w$ is reduced. Also, there are at least two letters $y \in A$ such that the word $w y d_2$ is reduced. If $d_1 (x w y)^2 d_2$ is not reduced  then $x =  y^{-1}$. For every $x$ with  $d_1 x w$ being  reduced,  there is at most one $y$ with $w y d_2$ being reduced such that $d_1 (x w y)^2 d_2$ is not reduced. Hence there exist $x$ and $y$
in $A$ such that $d_1 (x w y)^2 d_2$ is  reduced. Denote $b_1 := x  $ and $b_2 := y$.

Since  $c \not\in  \{ b_2, b_2^{-1} \}$  and the maximal power  of $c$ that can occur in $b_1 w b_2$ is $c^{|w| +1} = c^{n_w}$, it follows from the definition \eqref{w4} of the word  $z_4(w)$ that $b_2 c^{n_w+i} b_1$  can only occur in $z_4(w)^{\pm 1}$   as the suffix of length $n_w+i+2$ of the $i$th syllabus $b_1 w b_2 c^{n_w+i} b_2$ of  $z_4(w)$.

It remains to note that  the word $d_1 z_4(w) d_2$ is reduced because the word  $d_1 (x w y)^2 d_2$ is  reduced.
\end{proof}

\section{Proof of Theorem~\ref{Th1}}

As in Sect. 2, consider  the graphs $W$, $X, Y$, $Z$.

\begin{lemma}\label{wd42}
Suppose either graph $Q$,  where   $Q \in \{X, Y\}$,  contains a path $p_Q$ so that every vertex of $p_Q$ has degree 2 in $Q$, $ \beta_Q (p_Q) \equiv  z_4(w)$, where $w$ is a nonempty reduced word  over the alphabet $A = E Z$  and $z_4(w)$ is the word defined by \eqref{w4}. Let $z_4(w) = z_{41} z_{42}$ be a factorization of  $z_4(w)$  so that
\begin{gather*}
 z_{41} \equiv  b_1 w b_2 c^{n_w+1} b_2 b_1 w b_2 c^{n_w+2} b_2  , \quad
 z_{42} \equiv  b_1 w b_2 c^{n_w+3} b_2 b_1 w b_2 c^{n_w+4} b_2
\end{gather*}
and let $v_2(p_Q)$ be a vertex of the path $p_Q$ that defines a corresponding  factorization
$p_Q = p_{Q1}p_{Q2}$ so that $\ph(p_{Q1}) \equiv z_{41}$, $\ph(p_{Q2}) \equiv z_{42}$.

Furthermore, assume that there exists a vertex $u \in VW$, where $W = \core( X  \times_Z Y)$, such that either $\alpha_X(u) = v_2(p_X)$ and  $\alpha_Y(u) \in  p_Y$ or $\alpha_X(u) \in  p_X$ and  $\alpha_Y(u)  = v_2(p_Y)$. Then  $\alpha_X(u) = v_2(p_X)$,  $\alpha_Y(u)  = v_2(p_Y)$, and there is a path $p$ in $W$ such that $\alpha_X(p) =p_X$ and $\alpha_Y(p) =p_Y$.
\end{lemma}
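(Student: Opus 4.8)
The plan is to exploit the rigidity of the word $z_4(w)$ established in Lemma~\ref{wd4}: the only way the block $b_2 c^{n_w+i} b_2$ can sit inside $z_4(w)^{\pm1}$ is in its standard position with $k=1$. Since $p_Q$ consists of degree-$2$ vertices in $Q$ and is labeled (via $\beta_Q$) by $z_4(w)$, the label of any reduced path through the edges of $p_Q$ is forced to be a subword of $z_4(w)^{\pm1}$, and the syllable structure of $z_4(w)$ together with the $c$-power ``markers'' $c^{n_w+i}$ pins down exactly where in $p_Q$ one is located. The same applies to $p_X$ in $X$ and $p_Y$ in $Y$.

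First I would set up the pullback picture. Recall $W=\core(X\times_Z Y)$, so a vertex $u\in VW$ is (after projection) a pair $(\alpha_X(u),\alpha_Y(u))$ with $\beta_X(\alpha_X(u))=\beta_Y(\alpha_Y(u))$ in $Z$, and edges of $W$ similarly project compatibly. Assume the first alternative, $\alpha_X(u)=v_2(p_X)$ and $\alpha_Y(u)\in p_Y$ (the other is symmetric). The vertex $v_2(p_X)$ is the unique vertex of $p_X$ where the syllable factorization $z_{41}z_{42}$ occurs; in particular, reading along $p_X$ from $v_2(p_X)$ one sees the edge labeled by the first letter $b_1$ of $z_{42}$ going forward and the edge labeled by the last letter $b_2$ of $z_{41}$ going backward. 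Pushing this down through $\beta_X$ gives a concrete local picture in $Z$ at the vertex $\beta_X(v_2(p_X))$: two specified incident edges with labels determined by the cut point between $z_{41}$ and $z_{42}$.

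Next I would transport this local data to the $Y$-side. Since $\alpha_Y(u)$ projects to the same vertex of $Z$ and $Y$ is an immersion, the two incident edges at $\beta_Y(\alpha_Y(u))$ that are forced to exist (coming from the compatibility of the pullback core with the local picture at $u$) must carry the same $Z$-labels. Now walk along $p_Y$ in both directions from $\alpha_Y(u)$: because every vertex of $p_Y$ has degree $2$ in $Y$, the walk is forced to continue along $p_Y$, and the label it reads is a subword of $z_4(w)^{\pm1}$ containing the transition pattern between the blocks $c^{n_w+2}$ and $c^{n_w+3}$. By Lemma~\ref{wd4}, such a transition occurs in $z_4(w)^{\pm1}$ only at the standard location with $k=1$, namely exactly at the cut point $v_2(p_Y)$. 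Hence $\alpha_Y(u)=v_2(p_Y)$, which is the first assertion. For the last assertion, starting from $u$ one builds the path $p$ in $W$ by simultaneously tracing $p_X$ from $v_2(p_X)$ and $p_Y$ from $v_2(p_Y)$: at each step the next edge on the $X$-side and the next edge on the $Y$-side project to the same edge of $Z$ (both reading the same letter of $z_4(w)$), so they lift to a unique edge of $X\times_Z Y$; the degree-$2$ hypothesis guarantees there is no branching and that the lifted path stays inside the core (every vertex along it lies on a reduced closed path, since $p_X,p_Y$ themselves do). One checks the two ends match up because $p_X$ and $p_Y$ have the same length $|z_4(w)|$ and the same label; this yields $p$ with $\alpha_X(p)=p_X$, $\alpha_Y(p)=p_Y$.

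The main obstacle I anticipate is the bookkeeping at the junction vertex: one must rule out that the lift ``turns around'' or jumps to a different occurrence of a block $b_2c^{n_w+i}b_2$ somewhere else in $p_X$ or $p_Y$, and also that the two walks could get out of sync (one reading $z_4(w)$ forward, the other its inverse). Both are exactly what the ``standard location, $k=1$'' conclusion of Lemma~\ref{wd4} is designed to prevent — the escalating $c$-powers $c^{n_w+1},\dots,c^{n_w+4}$ make every block occurrence distinguishable and break the forward/backward symmetry — so the argument reduces to quoting Lemma~\ref{wd4} at each of finitely many potential ambiguities. The degree-$2$ hypothesis on $p_X$ and $p_Y$ does the rest, forcing the lifts to be unique and to remain on $p_X$ and $p_Y$.
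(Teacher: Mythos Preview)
Your approach is essentially the paper's own: use the uniqueness of the marker blocks $b_2 c^{n_w+i} b_2$ in $z_4(w)^{\pm 1}$ (Lemma~\ref{wd4}) to pin the $Y$-projection of $u$ to $v_2(p_Y)$, then lift the path. The one place where the paper is more careful than your sketch is the length bookkeeping you flag at the end. You say the walk along $p_Y$ from $\alpha_Y(u)$ ``reads a subword of $z_4(w)^{\pm1}$ containing the transition pattern between the blocks $c^{n_w+2}$ and $c^{n_w+3}$'', but that containment is exactly what must be checked: if $\alpha_Y(u)$ sits near an end of $p_Y$, the overlap of the $X$-walk (centered at $v_2(p_X)$) with $p_Y$ could in principle be too short to exhibit a full marker $b_2 c^{n_w+i} b_2$. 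The paper handles this by choosing $i^*\in\{1,2\}$ with $|p_{Yi^*}|\ge\tfrac12|p_Y|$; that half is then long enough that, comparing with $z_{42}$ or $z_{41}^{-1}$ on the $X$-side, one is forced to see either $b_2 c^{n_w+3} b_2$ or $b_2 c^{n_w+2} b_2$ (the inverse-orientation possibility $b_2^{-1}c^{-n_w-i}b_2^{-1}$ being killed by the $k=1$ clause of Lemma~\ref{wd4}). Once a genuine marker is located inside $\beta_Y(p_Y)\equiv z_4(w)$, its standard position forces $\alpha_Y(u)=v_2(p_Y)$, and your lifting argument for $p$ goes through verbatim. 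So nothing is wrong with your plan; just replace the informal ``transition pattern'' with an explicit marker block and insert the longer-half case split to guarantee the overlap actually contains it.
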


\begin{proof} For definiteness, suppose $u \in VW$ is such  that  $\alpha_X(u) = v_2(p_X)$ and  $\alpha_Y(u) \in  p_Y$. Let $p_Y = p_{Y1}p_{Y2}$ be the factorization
of $p_Y$ defined by the vertex  $\alpha_Y(u)$.  Pick a path  $p_{Yi^*}$, $i^* = 1,2$, such that
 $ | p_{Yi^*} | \ge \tfrac 12| p_{Y} |$. Since the   factorization of $p_X$ defined by the vertex  $\alpha_X(u) = v_2(p_X) $ defines  the   factorization $z_4(w) = z_{41} z_{42}$ of the word $z_4(w) =   \beta_X(p_X) $, it follows that if $i^* = 2$ then the word $  \beta_Y(p_{Y2}) $ contains a subword
 $b_2 c^{n_w+3} b_2$   or contains a subword $b_2^{-1} c^{-n_w-2} b_2^{-1}$. The second subcase, however, is impossible by Lemma~\ref{wd4}. Similarly, if $i^* = 1$ then the word $\gamma \beta_Y(p_{Y1}) $ contains a subword
 $b_2 c^{n_w+2} b_2$  or contains a subword $b_2^{-1} c^{-n_w-3} b_2^{-1}$.  The second subcase  is impossible by Lemma~\ref{wd4}.  In either case $i^* = 1,2$, we can apply Lemma~\ref{wd4} to the subword
 $b_2 c^{n_w+3} b_2$ or the subword  $b_2 c^{n_w+2} b_2$ of  $ \beta_Y(p_{Y}) $ and obtain the desired conclusion.
 \end{proof}

We now define a special type of transformations over the graphs
$W, X, Y$, $Z$, called $(f, p)$-transformations.

Suppose that $f$ is an edge of the graph $Z$ and $p = e_1 \dots e_\ell$ is a path
in $Z$ such that  $p_- = (e_1)_- = f_-$,  $p_+ = (e_\ell)_+ = f_+$, and
 there are no occurrences of $f$, $f^{-1}$ among  edges $e_1, \dots, e_\ell$.

Let  $Q \in \{W,  X, Y, Z \}$ and $\ph : Q \to Z=U$
be a canonical immersion.
Consider a set $H_f$ of all edges $h$ in $EW \vee EX \vee EY \vee EZ$
that are sent by  $\ph$ to $f$. We replace every edge  $h \in H_f$    with a path
$p(h) = e_1(h) \dots e_\ell(h)$
such that
$$
\ph(e_i(h)) := \ph(e_i)
$$
for every  $i=1,\ldots, \ell$,
and we extend  every  map  $\nu \in \{ \alpha_X, \dots , \beta_Y \}$
to paths  $p(h)$ so that if $\nu : Q \to Q'$, where $Q, Q' \in \{ W, X, Y, Z \}$, and
$\nu(h) = h'$, then we extend  $\nu$ to paths   $p(h)$, $p(h')$ by setting
$
\nu(e_1(h)) :=  e_1(h'), \  \dots,   \  \nu(e_\ell(h)) :=  e_\ell(h') .
$
Thus obtained graphs we denote by $Q_{fp}$, where $Q \in \{ W, X, Y, Z \}$,
and thus obtained  maps we denote by  $ \alpha_{X_{fp}}, \dots,  \beta_{Y_{fp}} $.

We now perform  folding process over every modified graph $Q_{fp}$,
$Q \in \{ W, X, Y, Z \}$. Recall that this is an inductive procedure
which identifies every pair of oriented edges $g_1, g_2 \in E Q_{fp}$ whenever $(g_1)_- =(g_2)_-$  and
$g_1, g_2$  have the same  labels $\ph(g_1)$, $ \ph(g_2)$.

Note that folding process  decreases by one the reduced rank  $\rr (Z)$ because the new path
$p(f) = e_1(f) \dots e_\ell(f)$ that replaces the edge $f$ in $Z$ will be attached to the
path  $p = e_1 \dots e_\ell$  in $Z \setminus \{ f , f^{-1}\} $ thus producing a  graph $\bar Z_{fp}$ with
$\rr (\bar Z_{fp}) =  \rr (Z)-1$.
Clearly, $\bar Z_{fp}$ is a bouquet of $\rr (Z)-1$ circles and $\core(\bar Z_{fp}) = \bar Z_{fp}$.

Furthermore, when a folding process applied to $Q_{fp}$,   $Q \in \{ W,  X, Y, Z \}$,
is complete and produces a graph  $\bar Q_{fp}$,  we will take the  core of  $\bar Q_{fp}$ thus obtaining
a graph  $Q_{p} = \core(\bar Q_{fp})$.   It follows from the definitions that we will have  maps
$ \alpha_{X_{p}}, \dots , \beta_{Y_{p}}$ with
 properties of original maps $\alpha_X, \ldots ,  \beta_Y$.
This alteration of the  graphs $W,  X, Y, Z $  by means of an edge
$f \in EZ$ and a path $p$ in $Z$ will be called an {\em $(f, p)$-transformation}  over  the graphs
$W, X, Y$, $Z$.

We will say that an $(f, p)$-transformation is {\em conservative}
if it preserves the numbers $\barr (X)$, $\barr (Y)$,
$\barr (W_s)$ for every connected component $W_s$ of $W$,  $s \in S(H,K)$,  and
the core $\core(X_{p}  \times_{Z_{p}}  Y_{p} )$ of the pullback
$X_{p}  \times_{{Z_{p}}}  Y_{p}$
coincides with  the graph $W_{p}$.
Thus a conservative $(f, p)$-transformation decreases  $\barr (Z)$ by one while
keeping the numbers $\barr (X)$, $\barr (Y)$, $\barr (W)$ unchanged.

Here is our principal technical result that will be used to prove Theorem~\ref{Th1}.

\begin{lemma}\label{cntr}
Suppose $\barr (Z) \ge 2$, $U = Z$, $\gamma = \mbox{id}_Z$,    and neither of the maps
$ \beta_X : X \to Z$, $\beta_Y : Y \to Z$ is a covering.
Then  there is an automorphism $\tau$ of the free group $F(EZ) = \pi_1(Z)$ such that
Stallings graphs of subgroups $\tau(H)$, $\tau(K)$, $\tau( \bigvee_{s\in S(H,K)} (H \cap sKs^{-1}) )$, denoted
$X^\tau$, $Y^\tau$, $W^\tau$, resp., have no vertices of degree 1 and there exists a  conservative $(f,p)$-transformation over the  graphs $W^\tau$, $X^\tau$, $Y^\tau$, $Z^\tau = Z$.
\end{lemma}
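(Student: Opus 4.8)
The plan is to produce the automorphism $\tau$ by assembling the treatment operations of Lemmas~\ref{BF}--\ref{REF} applied to the Stallings graph $Z$, viewed as a finite irreducible $A$-graph with $A = EZ$ and $|A| = 2\,\rr(Z) \ge 4$; note that since $\beta_X$, $\beta_Y$ are not coverings, the graphs $X$, $Y$, $W$ are proper in the relevant sense, and since $\barr(Z)\ge 2$ we have $|A|\ge 6$ so that Lemmas~\ref{CAB} and \ref{REF} apply. First I would apply Lemma~\ref{REF} to $Z$: this yields a finite sequence of treatments producing $Z' = B[f_1,\dots,f_{\ell+1}]$, which records an automorphism $\tau_0$ of $F(A)$ (the composition of the elementary automorphisms $f_j \mapsto w_1^{(j)} f_j (w_2^{(j)})^{-1}$), so that for every $f\in A$ every connected component of $Z'_f$ is $A_f$-incomplete, and moreover the edge $f := f_{\ell+1}$ enjoys the rigidity property: every edge labeled $f$ has both endpoints of degree $2$, and the labels $\ph(h_1(e))$, $\ph(h_2(e))$ of the adjacent edges of a reduced path $h_1(e)\,e\,h_2(e)$ through such an $e$ are independent of $e$. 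This independence is what will let us find a \emph{single} path $p$ in $Z'$ with $p_- = f_-$, $p_+ = f_+$, and no occurrence of $f^{\pm1}$, so that the $(f,p)$-transformation is well-defined; it exists because $Z'_f$ has no $A_f$-complete component, so one can route from $f_-$ to $f_+$ through a component whose boundary behaviour is controlled, using Lemma~\ref{wd4}-type words to pad the path and keep it reduced.

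Next I would push the automorphism $\tau_0$ forward to $X$, $Y$, $W$: set $X^\tau$, $Y^\tau$, $W^\tau$ to be the Stallings graphs of $\tau_0(H)$, $\tau_0(K)$, $\tau_0(\bigvee_s H\cap sKs^{-1})$. Since each treatment is realized by an automorphism of $F(A)$, it does not change any of $\barr(X)$, $\barr(Y)$, $\barr(W_s)$, $|S(H,K)|$, nor the property that $W^\tau = \core(X^\tau\times_{Z} Y^\tau)$ (these are invariants of the subgroups, and the pullback/core description of Walter Neumann is automorphism-equivariant). The degree-$1$-vertex issue is handled by the $f$-treatment construction itself: the attached forests $F(V_1,w)$ in Lemma~\ref{BF} have leaves, but one applies Lemma~\ref{BF2} to the incomplete parts only, and passes to cores throughout; alternatively, after applying $\tau_0$ one applies Lemma~\ref{BF}/\ref{BF2} once more to the sets of degree-$1$ vertices of $X^\tau$, $Y^\tau$, $W^\tau$ to absorb them — but cleaner is to observe that Stallings graphs, being cores by our standing assumption (end of Section~2), automatically have no vertices of degree~$1$, so this clause is essentially free once we take cores.

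The heart of the argument is verifying that the $(f,p)$-transformation just set up is \emph{conservative}, i.e. that after the folding-and-core process the equality $\core(X_p \times_{Z_p} Y_p) = W_p$ survives, and that $\barr(X)$, $\barr(Y)$, $\barr(W_s)$ are each preserved. The reduced rank count is the easy half: by the discussion before the lemma, the folding on $\bar Z_{fp}$ drops $\rr(Z)$ by exactly one because $p(f)$ folds entirely into the disjoint path $p$ in $Z\setminus\{f^{\pm1}\}$; for $X$, $Y$, $W$ one shows no extra folding occurs, using precisely the rigidity of $f$ from Lemma~\ref{REF} (the neighbours of every $f$-edge are pre-determined, so replacing $f$-edges by copies of $p(f)$ cannot create new coincidences of the form $(g_1)_- = (g_2)_-$, $\ph(g_1)=\ph(g_2)$ except the intended ones). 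The pullback equality is where I expect the main obstacle: one must rule out that, after subdividing every $f$-edge of $X$ and $Y$ into the path $p$ and folding, the pullback $X_p\times_{Z_p} Y_p$ acquires extra core beyond $W_p$ — equivalently that no new nontrivial intersections $H\cap sKs^{-1}$ are created and no existing ones grow. This is exactly what Lemma~\ref{wd42} is designed for: by choosing the path $p$ to carry a label of the form $z_4(w)$ (as produced via Lemma~\ref{wd4}), any vertex $u$ of the new $W$ lying over the "middle" vertex $v_2(p_X)$ of the inserted path on the $X$-side must also lie over $v_2(p_Y)$ on the $Y$-side, and then the whole inserted path lifts to $W$ — so the inserted material contributes to $W_p$ in lockstep on both sides and changes neither $\barr(W_s)$ nor the component structure. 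Carrying out this matching carefully — i.e. checking that every new pullback vertex is forced into the "standard location" and that the core picks up exactly the right edges — is the technical crux; the rest is bookkeeping with the invariance of reduced rank under automorphisms and the Euler-characteristic formula $\barr(Q) = \tfrac12|EQ| - |VQ|$.
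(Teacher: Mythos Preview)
Your proposal has a genuine structural error at the very first step: you apply Lemma~\ref{REF} to the graph $Z$. This cannot work. The graph $Z$ is a bouquet of $\rr(Z)$ circles with a single vertex of degree $|A|=2\rr(Z)$, so $Z$ is itself an $A$-\emph{complete} graph and the hypotheses of Lemmas~\ref{CAB} and~\ref{REF} fail. More fundamentally, since $Z$ is the Stallings graph of the entire group $F(A)$, any automorphism of $F(A)$ leaves $Z$ unchanged; an $f$-treatment of $Z$ is the identity on $Z$. So treating $Z$ produces no useful ``rigidity'' of $f_{\ell+1}$-edges, and there is no mechanism by which the degree-$2$ and fixed-neighbour conditions you need would propagate to $X$, $Y$, $W$.

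The paper's proof applies Lemma~\ref{REF} to $B := X \vee Y$, which is exactly where the non-covering hypothesis on $\beta_X$, $\beta_Y$ enters: it guarantees that neither component of $B$ is $A$-complete. The resulting treatments then \emph{define} the automorphism $\tau$, and it is in $X^\tau \vee Y^\tau$ that every $f_{\ell+1}$-edge sits on a degree-$2$ path with prescribed neighbouring labels $d_1, d_2$. After that, one must still construct the specific path $p$: the paper applies Lemma~\ref{BF} to the $A_{f_{\ell+1}}$-graph $B[f_1,\dots,f_{\ell+1}]_{f_{\ell+1}}$ with $V_1$ equal to its full vertex set, obtaining a word $w$ that \emph{cannot be read from any vertex} of that graph, and only then forms $p = z_4(w)$ via Lemma~\ref{wd4}. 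Your sketch never produces such a $w$, and without it Lemma~\ref{wd42} cannot be invoked: before one is in the situation of that lemma, one must rule out the case $\widetilde\alpha_Y(u)\in Y[f_1,\dots,f_{\ell+1}]_{f_{\ell+1}}$ (i.e.\ $\widetilde\alpha_Y(u)$ lies off every standard $z_4$-path), and this is done by a length-and-location argument that hinges precisely on the unreadability of $b_1 w$ in $B[f_1,\dots,f_{\ell+1}]_{f_{\ell+1}}$.
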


 \begin{proof}  Assume that neither of the maps $ \beta_X : X \to Z$,  $ \beta_Y : Y \to Z$ is a covering.
 Consider the disjoint union $X \vee Y$ of $X, Y$ as a graph $B$ and the set $EZ$ as the alphabet $A$.
   Let  $\ph : EB \to A$ be defined on $EX$ as the restriction of $\beta_X$ and on $EY$ as the restriction of  $\beta_Y$. We also consider $W$ as an $A$-graph by using the map $\ph : EW \to A$, where $\ph$ is the   restriction of  $\beta_X \alpha_X =  \beta_Y \alpha_Y$.

 Since neither of $X, Y$ is $A$-complete, we can apply Lemma~\ref{REF}
 and find a sequence of $f_1$-, $\dots, f_{\ell+1}$-treatments for $B$ which transform  the graph $B = X \vee Y$ into $B[f_1, \dots, f_{\ell+1}]$  with the properties of Lemma~\ref{REF}. Note that an $f_{}$-treatment of $B = X \vee Y$ can  equivalently be  described as an application of a suitable automorphism $\tau_f$ of the free group $F(A) = \pi_1(Z)$.
 We specify that this automorphism  $\tau_f$  is the composition of an automorphism $\theta_{f}$, given by
$\theta_{f}(f) = w_1 fw_2^{-1}$, $\theta_{f}(a) = a$ for every $a \in A_f$,  where $w_1, w_2$ are some words
over $A_f$, and an inner automorphism of $F(A)$ applied, if necessary, to move the base vertices of graphs
$\theta_{f}(X)$, $\theta_{f}(Y)$, representing subgroups  $\theta_{f}(H)$, $\theta_{f}(K)$, resp.,  so that
the base vertices, after the move,  would be in $\core(\theta_{f}(H))$, $\core(\theta_{f}(K))$ and the correspondence between graphs and subgroups would be preserved.
Therefore, the composition of these
 $f_1$-, $\dots, f_{\ell+1}$-treatments can be induced by a suitable  automorphism of the free group $F(A)$  applied
to subgroups $H, K$, $\bigvee_{s\in S(H,K)} (H \cap sKs^{-1})$ and to corresponding Stallings graphs
$X, Y, W$.
By Lemmas~\ref{REF}, \ref{BF2}, we may assume  that, for  every $c \in A$, the graph $B[f_1, \dots, f_{\ell+1}]_c$  has no $A_c$-complete component and that  every edge $e \in EB[f_1, \dots, f_{\ell+1}]$ such that $\ph(e) = f_{\ell+1}$ is contained in a reduced path  $h_1(e) e h_2(e)$, where  $h_1(e), h_2(e)$ are edges,
  so that  $\dg  e_-= $ $\dg  e_+ = 2$  and the letters $\ph(h_1(e))$, \ $\ph(h_2(e))$ are independent of  $e$.

Denote
 \begin{gather}\label{a12}
 \ph(h_1(e)) = d_1  , \quad  \ph(h_2(e)) = d_2  ,
\end{gather}
where $d_1, d_2 \in A_{f_{\ell+1}} =  EZ \setminus  \{f_{\ell +1}, f_{\ell +1}^{-1} \}$.  Since $B = X \vee Y$, we can represent the graph
$B[f_1 , \dots, f_{\ell +1}]$ in the form
$$
B[f_1 , \dots, f_{\ell +1}] =  X[f_1 , \dots, f_{\ell +1}] \vee Y[f_1 , \dots, f_{\ell +1}] ,
$$
where $X[f_1 , \dots, f_{\ell +1}] = X^\tau$ is obtained from $X$ by these $f_1$-, $\dots$, $f_{\ell+1}$-treatments
and $Y[f_1 , \dots, f_{\ell +1}]= Y^\tau$ is obtained from $Y$ by the $f_1$-, $\dots$, $f_{\ell+1}$-treatments.
Similarly, let $W[f_1 , \dots, f_{\ell +1}] = W^\tau$ denote the graph obtained from $W$ by the $f_1$-, $\dots$, $f_{\ell+1}$-treatments.
\smallskip

Consider  the irreducible $A_{f_{\ell+1}}$-graph
 $$
 B[f_1 , \dots, f_{\ell +1}]_{f_{\ell+1}}   = X[f_1 , \dots, f_{\ell +1}]_{f_{\ell+1}}  \vee Y[f_1 , \dots, f_{\ell +1}]_{f_{\ell+1}}
 $$
and apply Lemma~\ref{BF} to this graph and to the vertex set $V_1 = VB[f_1 , \dots, f_{\ell +1}]_{f_{\ell+1}} $. According to
Lemma~\ref{BF}, there exists a reduced nonempty word $w$ over the alphabet
$A_{f_{\ell+1}} = EZ \setminus \{f_{\ell +1}, f_{\ell +1}^{-1} \}$ with the properties of Lemma~\ref{BF}. In particular, for every vertex $v \in VB[f_1 , \dots, f_{\ell +1}]_{f_{\ell+1}} $,  the $A_{f_{\ell+1}}$-graph $B[f_1 , \dots, f_{\ell +1}]_{f_{\ell+1}}$ contains no path $p$ such that $p_- = v$ and $\ph(p) \equiv  w$.

Since $A = EZ$ and $|A| \ge 6$, we have $| A_{f_{\ell+1}} | \ge 4$. Hence, Lemma~\ref{wd4} applies to the word $w$, to the alphabet $A_{f_{\ell+1}} $ and to the letters $d_1, d_2 \in A_{f_{\ell+1}} $ defined by equalities \eqref{a12}. By Lemma~\ref{wd4}, there are letters $b_1, b_2 \in A_{f_{\ell+1}}$ such that the word $d_1(b_1 w b_2)^2 d_2$ is reduced.  Furthermore, let $c \in A_{f_{\ell+1}}$, $c \not\in \{ b_2, b_2^{-1} \}$, and $n_w = |w| +1$. Then the word $z_4(w)$ over $A_{f_{\ell+1}}$, given by formula \eqref{w4},  has the properties stated in Lemma~\ref{wd4}.

Now we perform an $(f_{\ell+1}, z_4(w))$-transformation  over the graphs
$$
W[f_1 , \dots, f_{\ell +1}]  = W^\tau , \quad X[f_1 , \dots, f_{\ell +1}] = X^\tau, \quad   Y[f_1 , \dots, f_{\ell +1}] = Y^\tau, \quad  Z .
$$
Making this  transformation turns every edge
$e$ such that  $\ph(e)  = f_{\ell+1}$ into a path $p = p(e)$ such that
$p_- = e_-$, $p_+ = e_+$, and $\ph(p) \equiv z_4(w)$. Let
$B (f_{\ell+1}, z_4)$, $X (f_{\ell+1}, z_4)$, $Y (f_{\ell+1}, z_4)$,  $W (f_{\ell+1}, z_4)$ denote the resulting $A_{f_{\ell+1}}$-graphs.

Observe that if $\mu : F(A) \to F(A_{f_{\ell+1} })$ is the epimorphism defined by
$\mu(b) = b$ for every $b \in A_{f_{\ell+1} }$ and $\mu(f_{\ell+1}) =  z_4(w)$ then we have
\begin{gather*}
W(f_{\ell+1}, z_4) = \mu( W[f_1 , \dots, f_{\ell +1}] ) =  \mu(W^\tau) , \\
X(f_{\ell+1}, z_4) = \mu( X[f_1 , \dots, f_{\ell +1}] ) =  \mu(X^\tau) , \\
Y(f_{\ell+1}, z_4) = \mu( Y[f_1 , \dots, f_{\ell +1}] ) =  \mu(Y^\tau) .
\end{gather*}

A path $p = p(e)$ in graphs $X (f_{\ell+1}, z_4)$, $Y (f_{\ell+1}, z_4)$,  $W (f_{\ell+1}, z_4)$ such that  $\ph(p)  \equiv  z_4(w)$  and $p$ results from an edge $e$ with $\ph(e)  =    f_{\ell+1}$ will be called a  {\em standard $z_4$-path}.  Let $p$ be a standard $z_4$-path and $p = p_1 p_2 p_3 p_4$ be a factorization of $p$ so that $\ph(p_i)  \equiv b_1 w b_2 c^{n+i} b_2$, where $i =1,2,3,4$. Denote $v_2(p) := (p_2)_+$.

Recall that $Z$ has a single vertex and an $(f_{\ell+1}, z_4(w))$-transformation  converts $Z$ into $Z_{f_{\ell+1}} =  Z \setminus  \{   f_{\ell+1},  f_{\ell+1}^{-1} \}$  such that $E Z_{f_{\ell+1}} = A_{f_{\ell+1}}$. Let
$$
\ph_{f_{\ell+1}} :  B(f_{\ell+1}, z_4) = X (f_{\ell+1}, z_4)  \vee Y(f_{\ell+1}, z_4)   \to   Z_{f_{\ell+1}}
$$
denote the corresponding immersion.  Consider the pullback
$$
X(f_{\ell+1}, z_4)     \times_{Z_{f_{\ell+1}} }   Y(f_{\ell+1}, z_4)
$$
and its core $ \widetilde W  :=  \core (X(f_{\ell+1}, z_4)     \times_{Z_{f_{\ell+1}} }
Y(f_{\ell+1}, z_4))$.

Let us prove the equality $\widetilde W = W (f_{\ell+1}, z_4)$.

Let
$$
\widetilde \alpha_X :  \widetilde W \to  X(f_{\ell+1}, z_4) , \quad
\widetilde \alpha_Y :  \widetilde W \to  Y(f_{\ell+1}, z_4)
$$
denote the projection maps. Suppose $u \in V \widetilde W$ is a vertex such that $\widetilde \alpha_X(u) = v_2(p_X)$, where $p_X$ is a standard $z_4$-path of $X (f_{\ell+1}, z_4)$.

First we assume that
\begin{gather}\label{asmY}
\widetilde \alpha_Y(u) \in  Y[f_1 , \dots, f_{\ell +1}]_{f_{\ell+1}}  ,
\end{gather}
here the graph $ Y[f_1 , \dots, f_{\ell +1}]_{f_{\ell+1}}$ is regarded as a subgraph of $Y(f_{\ell+1}, z_4)$.

Note that the conclusion of Lemma~\ref{BF} holds for the graph  $B [f_1 , \dots, f_{\ell +1}]_{f_{\ell+1}}$, for the set $V_1 = V B [f_1 , \dots, f_{\ell +1}]_{f_{\ell+1}}$ and for the reduced word $w_1 \equiv b_1 w$ in place of $w$. Hence, the graph  $ Y[f_1 , \dots, f_{\ell +1}]_{f_{\ell+1}}$ contains no path $r$ such that $r$ starts at $ \widetilde \alpha_Y(u) $ and $r$ has  the label $\ph_{f_{\ell+1}}(r) \equiv  b_1 w$.
Since there is a path $p'$ in $X (f_{\ell+1}, z_4)$ such that $p'$  starts at $ \widetilde \alpha_X(u) = v_2(p_X)$, $\ph_{f_{\ell+1}}(p') \equiv  b_1 w b_2 c^{n+3} b_2$, and all vertices of $p'$ have degree 2, it follows that there is a path $q$ in $ Y (f_{\ell+1}, z_4) $  such that $q$  starts at $ \widetilde \alpha_Y(u)$ and
$$
\ph_{f_{\ell+1}}(q) \equiv  \ph_{f_{\ell+1}}(p') \equiv    b_1 w b_2 c^{n_w+3} b_2 .
$$

Let $q = q_1 q_2$ be the  factorization of $q$ defined so that
$$
\ph_{f_{\ell+1}}(q_1) \equiv b_1 w  , \quad  \ph_{f_{\ell+1}}(q_2) \equiv b_2 c^{n_w+3} b_2 .
$$
By the above observation based on Lemma~\ref{BF}, the path $q_1$ may not be entirely contained in $Y[f_1 , \dots, f_{\ell +1}]_{f_{\ell+1}}  \subseteq  Y (f_{\ell+1}, z_4)$.
On the other hand, if $(q_1)_+ \in  Y [f_1 , \dots, f_{\ell +1}]_{f_{\ell+1}}  \subseteq  Y (f_{\ell+1}, z_4) $,
then, in view of   \eqref{asmY}, the path $q_1$ would have to contain a standard $z_4$-path of $ Y (f_{\ell+1}, z_4)$ which is impossible for $|q_1| <  |z_4(w)|$.
Therefore,    $(q_1)_+ \not\in  Y [f_1 , \dots, f_{\ell +1}]_{f_{\ell+1}}  \subseteq  Y (f_{\ell+1}, z_4) $  and  the vertex $(q_1)_+$ must belong to a standard $z_4$-path $p_Y$ of $ Y (f_{\ell+1}, z_4)$.

Since $|q_1| = |w|  +1$,  $|q_2| = |w|  +6$, and $|p_Y| = |z_4(w)|= 8|w| +26$,
it follows from   \eqref{asmY} and from $(q_1)_+ \in p_Y$  that  $q_2$ is a subpath of $p_Y^{\pm 1}$ and  there is a factorization  $p_Y = p_{Y1} p_{Y2} $ defined by the vertex $(q_2)_+ \in  p_Y$, where a shortest path out of $p_{Y1}, p_{Y2}$ contains $q_2^{\pm 1}$ and
\begin{gather}\label{minpy}
\min(| p_{Y1} |, | p_{Y2}| ) \le |q| = 2|w| +7  .
\end{gather}
On the other hand, if $p_Y = p_{Y3} p_{Y4}  =  p_{Y5} p_{Y6}$ are factorizations of $p_Y$ so that the words $\ph_{f_{\ell+1}}(p_{Y3})$, \ $\ph_{f_{\ell+1}}(p_{Y6})$ contain the standard occurrence of  the word $b_2 c^{n_w+3} b_2$ in $z_4(w)$, then
$| p_{Y3} | \ge 6|w| +18$ and $| p_{Y6} | \ge 3|w| +14$. These inequalities, in view of \eqref{minpy}, mean that the occurrence of
$$
(b_2 c^{n_w+3} b_2)^{\pm 1} = \ph_{f_{\ell+1}}(q_2)^{\pm 1}
$$
in $z_4(w) \equiv  \ph_{f_{\ell+1}}(p_{Y})$ is not standard. This contradiction  to Lemma~\ref{wd4}  proves that the inclusion \eqref{asmY} is impossible.

Thus it is shown that, for every  vertex $u \in V \widetilde W$,
if  $\widetilde \alpha_X(u) = v_2(p_X)$ for some standard $z_4$-path  $p_X$ of $X (f_{\ell+1}, z_4)$, then  $\widetilde \alpha_Y(u) \in p_Y$, where $p_Y$ is a standard  $z_4$-path in $Y (f_{\ell+1}, z_4)$.  Switching the graphs $X (f_{\ell+1}, z_4)$ and $Y (f_{\ell+1}, z_4)$ in the above arguments, we can analogously show that,
 for every  vertex $u \in \widetilde W$,
if  $\widetilde \alpha_Y(u) = v_2(p_Y)$ for some standard $z_4$-path  $p_Y$ of $Y (f_{\ell+1}, z_4)$, then  $\widetilde \alpha_X(u) \in p_X$, where $p_X$ is a  standard  $z_4$-path in $X (f_{\ell+1}, z_4)$.

Now we can use Lemma~\ref{wd42} to conclude that, for every  vertex $u \in \widetilde W$,
if  $\widetilde \alpha_X(u) = v_2(p_X)$, where $p_X$  is a standard  $z_4$-path  of $X (f_{\ell+1}, z_4)$, or if  $\widetilde \alpha_Y(u) = v_2(p_Y)$, where $p_Y$ is a standard  $z_4$-path  of $Y (f_{\ell+1}, z_4)$, then,  in either case,
$\widetilde \alpha_X(u) = v_2(p_X)$ and $\widetilde \alpha_Y(u) = v_2(q_Y)$, where both $p_X$, $p_Y$ are standard  $z_4$-paths. In addition, there exists a path $p_W$ in $\widetilde W$ such that
$\widetilde \alpha_X(p_W) = p_X$ and $\widetilde \alpha_Y(p_W) = p_Y$. Consequently,  if $p$ is a path in
$\widetilde W$ such that one of $\widetilde \alpha_X(p)$, $\widetilde \alpha_Y(p) $ is a standard $z_4$-path,
then both $\widetilde \alpha_X(p)$, $\widetilde \alpha_Y(p) $ must be standard  $z_4$-paths. Now the desired equality  $\widetilde W = W (f_{\ell+1}, z_4) $ becomes apparent.

Since $\barr (X) = \barr (X(f_{\ell+1}, z_4)) = \barr (\mu(X^\tau) ) $
and $\barr (Y) = \barr (Y(f_{\ell+1}, z_4))  = \barr (\mu(Y^\tau) ) $,  this $(f_{\ell+1}, z_4(w))$-transformation  over the graphs  $W^\tau$, $X^\tau$, $Y^\tau, Z$  is conservative, as required.
It remains to note that  $A_{f_{\ell+1}}$-graphs
$$
X (f_{\ell+1}, z_4) = \mu(X^\tau) ,  \quad Y (f_{\ell+1}, z_4)= \mu(Y^\tau)
$$
are $A_{f_{\ell+1}}$-incomplete and Lemma~\ref{cntr} is proven.
\end{proof}

{\em Proof of Theorem~\ref{Th1}}. Since neither of subgroups $H, K$ has finite index in
$ L = \langle H, K, S(H,K)   \rangle$, we conclude that $\rr( L ) \ge 2$. There is nothing to prove if
$\rr( L ) = 2$. Hence, we may assume that $\rr( L ) = n  >2$.
It follows from Lemma~\ref{cntr} and the definitions  that there is an
epimorphism $\e_1 : L \to F_{n-1}$, where $F_{n-1}$ is a free group of rank $n-1$,  with the following properties.  The restriction of $\e_1$ on $H$ and on $K$ is injective, a set
$S(\e_1(H), \e_1(K) )$ for subgroups $\e_1(H), \e_1(K)$ of $F_{n-1}$ can be taken to be $\e_1(S(H,K))$,
 for every $s \in S(H,K)$, the restriction
 $$
 \e_{1,s} : H \cap s K s^{-1} \to  \e(H)  \cap \e(s) \e(K) \e(s)^{-1}
 $$
of $\e_1$ on   $H \cap s K s^{-1}$  to   $\e(H)  \cap \e(s) \e(K) \e(s)^{-1}$ is surjective, and  neither  of  $\e_1(H), \e_1(K)$   has finite index in the group
$$
F_{n-1} = \e_1(L) =  \langle \e_1(H), \e_1(K), \e_1(S(H,K)) \rangle .
$$
Iterating this argument, we obtain a desired epimorphism $\e : L \to F_{2}$. \qed

\smallskip

{\em Proof of Corollary~\ref{Cor}}. If one of subgroups $H$, $K$ has infinite index in their join
$\langle H, K \rangle$ then one of $H$, $K$ has also infinite index in the generalized join  $ L = \langle H, K, S(H,K)   \rangle$ and Theorem~\ref{Th1} applies. By Theorem~\ref{Th1}, there is an epimorphism $\e : L \to F_{2}$ that has the required properties of $\dl$. \qed


\begin{thebibliography}{[10]}

\bibitem[1]{CI}  D. F. Cummins and S. V. Ivanov, {\em Embedding of  groups and quadratic equations over groups}, preprint, {\tt arXiv:1607.06784 [math.GR]}.

\bibitem[2]{WD}  W. Dicks,  {\em Equivalence of the strengthened
Hanna Neumann conjecture and the amalgamated graph conjecture},
Invent. Math. {\bf  117}(1994), 373--389.


\bibitem[3]{Guba}  V. S. Guba, {\em Finitely generated complete groups}, Izv. Akad. Nauk SSSR Ser. Mat. {\bf 50}(1986), 883--924.

\bibitem[4]{HNN} G. Higman, B. H. Neumann, and H. Neumann, {\em Embedding theorems for groups}, J.
London Math. Soc. {\bf 24}(1949), 247--254.

\bibitem[5]{Iv10}  S. V. Ivanov, {\em On joins and intersections of subgroups in free groups},  preprint,
 {\tt arXiv:1607.04890 [math.GR]}.

\bibitem[6]{KM}
I. Kapovich and A. G. Myasnikov, {\em Stallings foldings and subgroups of free groups}, J. Algebra {\bf 248}(2002), 608--668.

\bibitem[7]{LS} R. C. Lyndon and P. E. Schupp, {\em Combinatorial
group theory}, Springer-Verlag,  1977.

\bibitem[8]{HN}  H. Neumann, {\em On the intersection of finitely generated
free groups}, {  Publ. Math.} {\bf  4}(1956), 186--189; Addendum,
{ Publ. Math.} {\bf  5}(1957), 128.

\bibitem[9]{WN} W. D. Neumann, {\em On the intersection of finitely
generated subgroups of free groups},  { Lecture Notes in Math.
(Groups-Canberra 1989)}  {\bf  1456}(1990), 161--170.

\bibitem[10]{St}  J. R. Stallings, {\em Topology of finite
graphs},  {Invent. Math.} {\bf  71}(1983), 551--565.

\bibitem[11]{Ol}
A. Yu. Ol'shanskii,  {\em Geometry of defining relations in
groups}, Nauka, Moscow, 1989; English translation: {\em Math.
and Its Applications, Soviet series},  vol. 70, Kluwer Acad.
Publ., 1991.
\end{thebibliography}
\end{document}